\documentclass[12pt]{amsart}
\usepackage{amsmath}
\usepackage{amsfonts}
\usepackage{amssymb}
\usepackage{amsthm}
\usepackage[all]{xy}
\usepackage{color}
\usepackage{verbatim}
\usepackage{graphicx}
\usepackage{tikz}
\usepackage{placeins}
\usepackage{float}
\usepackage{listings}
\usepackage{tikz}
\usetikzlibrary{matrix}
\usetikzlibrary{positioning}
\usepackage{empheq}
\usepackage{caption}
\usepackage{cases}
\usepackage{epsfig}

\setlength{\textheight}{23cm}
\setlength{\textwidth}{16cm}
\setlength{\topmargin}{-0.8cm}
\setlength{\parskip}{1 em}

\hoffset=-1.4cm

\newtheorem{theorem}{Theorem}[section]
\newtheorem{lemma}[theorem]{Lemma}

\newtheorem{corollary}[theorem]{Corollary}
\newtheorem{definition}[theorem]{Definition}

\numberwithin{equation}{section}

\baselineskip=15pt

\author[lokenath Kundu, Kaustav Mukherjee]{Lokenath Kundu, Kaustav Mukherjee}

\email{bholaktw2010@gmail.com, lokenath$\_$kundu@srmap.edu.in}

\address{SRM University, A.P.}
\address{Indian Institute of Science Education and Research Bhopal, Madhya Pradesh 462066 }
\keywords{Riemann surface, finite group, stable upper genus.}

\title[Symmetry of surfaces for linear fractional group] {Symmetry of surfaces for linear fractional group}

\date{24/11/21}
\begin{document}
	
	\begin{abstract}
		We will compute the stable upper genus for the family of finite non-abelian simple groups $PSL_2(\mathbb{F}_p)$ for $p \equiv 3~(mod~4)$. This classification is well-grounded in the other branches of Mathematics like topology, smooth, and conformal geometry, algebraic categories.
		\end{abstract}
	\maketitle	
	\section{Introduction}
\noindent	Let $\Sigma_g$ be a Riemann surface of genus $g\geq 0$. We will imply by the action of a finite group $G$ on $\Sigma_g$, a properly discontinuous, orientation preserving, faithful action. The collection $\lbrace g \geq 0| G ~\text{acts on}~ \Sigma_g \rbrace$ is known as spectrum of $G$ denoted by $Sp(G)$. The least element of $Sp(G)$ is denoted by $\mu(G)$ familiar as the minimum genus of the group $G$. An element $g \in Sp(G)$ is said to be the stable upper genus of a given group $G$, if $g+i \in Sp(G)$ for all $i \in \mathbb{N}$. The necessary and sufficient condition for an effective action of a group $G$ preserving the orientation on compact, connected,  orientable surface $\Sigma_g$ of genus $g$ except for finitely many exceptional values of $g$ was proved by Kulkarni in \cite{kulkarni}. In particular the group $PSL_2(\mathbb{F}_p)$  has the above mentioned property for $p \geq ~ 5$, and $p$ is odd. The authors determined the minimum genus for the family of finite groups in \cite{ming2,ming1}. \\
\noindent Any action of a finite group $G$ on a Riemann surface $\Sigma_g$ of genus $g$ gives an orbit space $\Sigma_h ~ := \Sigma_g/G$ also known as orbifold. We can take this action as conformal action, that means the action is analytic in some complex structure on $\Sigma_g$, as the positive solution of Nielson Realization problem \cite{niel,eck} implies that if any group $G$ acts topologically on $\Sigma_g$ then it can also act conformally with respect to some complex structure. \\
\noindent The orbit space $\Sigma_h$ is again a Riemann surface possibly with some marked points and the quotient map $p~:~\Sigma_g~\rightarrow~\Sigma_h$ is a branched covering map. Let $B=~\lbrace c_1,c_2,\dots,c_r~ \rbrace$ be the set of all branch points in $\Sigma_h$ and $A:=p^{-1}(B)$. Then  $p:~\Sigma_g \setminus A ~\rightarrow ~\Sigma_h \setminus B$ is a proper covering. The tuple $(h;m_1,m_2,\dots,m_r)$ is known as signature of the finite group $G$, where $m_1,m_2,\dots,m_r$ are the order of stabilizer of the preimages of the branch points  $c_1,c_2,\dots,c_r$ respectively. By Riemann-Hurwitz formula we have $$ (g-1)=~|G|(h-1)+\frac{|G|}{2}\sum_{i=1}^r(1-\frac{1}{m_i}) \label{R.H.formula}.$$ The signature of a group encodes the information of the group action of a Riemann surface and about $Sp(G)$. For more details about signature of Fuchsian group and Riemann surfaces refer to \cite{otto}, and \cite{sve} respectively. In \cite{kundu1,kundu2}, with accurate use of Frobenius theorem and explicit formation of surface kernel epimorphisms, the author able to prove the following theorems:

	\begin{theorem}\label{1}\cite{kundu1}
		$ ( h;2^{[a_{2}]}, 3^{[a_{3}]}, 4^{[a_{4}]}, 7^{[a_{7}]} ) $ is a signature of $ PSL_2(\mathbb{F}_7) $ if and only if $$  1+168(h-1)+ 42a_{2} + 56a_{3} + 63a_{4} + 72a_{7} \geq 3 $$ except when the signature is $(1;2)$.
	\end{theorem}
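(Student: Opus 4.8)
The plan is to translate the statement into the existence of a surface kernel epimorphism (SKE) via the Riemann existence theorem, and then dispatch the two implications separately. First I would record the arithmetic data for $G = PSL_2(\mathbb{F}_7)$, $|G| = 168$: its conjugacy classes (of element orders $1,2,3,4,7$, with two classes of order $7$), its character table (irreducible degrees $1,3,3,6,7,8$), and its maximal subgroups (two classes isomorphic to $S_4$ and one class isomorphic to the Frobenius group $7\rtimes 3$ of order $21$). Recall that $(h; m_1,\dots,m_r)$ is a signature of $G$ exactly when the Fuchsian group with that signature admits an epimorphism onto $G$ sending each elliptic generator $x_i$ to an element of order precisely $m_i$; under Riemann--Hurwitz the covering genus is
\[
g = 1 + 168(h-1) + 42a_{2} + 56a_{3} + 63a_{4} + 72a_{7},
\]
which is the quantity appearing in the stated inequality.

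For necessity, since $G$ is nonabelian simple its minimum genus equals $3$, realized by the Klein quartic with signature $(0;2,3,7)$; hence every genuine action forces $g \geq 3$, giving the inequality. I would then exclude $(1;2)$ by showing no SKE exists: such an epimorphism would demand a generating pair $(a,b)$ of $G$ with $[a,b]$ of order $2$, and I would rule this out by a direct structure-constant computation, the relevant count vanishing after the pairs landing in proper subgroups are removed.

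For sufficiency I would construct an explicit SKE for every admissible tuple. The engine is a collection of \emph{reduction moves} together with the verification of finitely many base cases. When $h \geq 1$ there is great freedom: since $G$ is perfect, any prescribed product of elliptic images can be completed to the identity by choosing the hyperbolic generators so that their commutators realize the required element, while generation is arranged using the handle generators; so these cases reduce to the mere existence of elements of the needed orders. The delicate regime is $h = 0$, where one must produce $x_1,\dots,x_r$ of orders drawn from $\{2,3,4,7\}$ with $\prod x_i = 1$ that generate $G$. Here I would invoke Frobenius's formula
\[
N(C_1,\dots,C_r) = \frac{|C_1|\cdots|C_r|}{|G|}\sum_{\chi \in \mathrm{Irr}(G)}\frac{\chi(c_1)\cdots\chi(c_r)}{\chi(1)^{r-2}}
\]
to count solutions with the $x_i$ in prescribed classes, then subtract, by inclusion--exclusion over the subgroup lattice, the solutions trapped inside the maximal subgroups $S_4$ and $7\rtimes 3$, so that the image is forced to be all of $G$.

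The main obstacle will be precisely this genus-zero analysis: showing that the Frobenius count minus the proper-subgroup contributions is strictly positive for every period combination compatible with $g \geq 3$ (apart from the genuine small exceptions), and then propagating these base cases to all larger tuples by the reduction moves --- adding a handle, trading a handle for a fixed block of periods, or inserting a neutral block such as a pair of equal-order periods whose product is trivial. Organizing these moves so that together they cover every admissible $(h; 2^{[a_{2}]}, 3^{[a_{3}]}, 4^{[a_{4}]}, 7^{[a_{7}]})$ while never stepping outside the admissible region is the bookkeeping heart of the argument.
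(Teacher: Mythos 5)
Your overall skeleton --- translating signatures into surface-kernel epimorphisms via Riemann--Hurwitz, counting genus-zero solutions with Frobenius's character formula, subtracting the tuples trapped in the maximal subgroups $S_4$ and $7\rtimes 3$, and propagating base cases by reduction moves --- is exactly the method the paper attributes to \cite{kundu1} (``accurate use of Frobenius theorem and explicit formation of surface kernel epimorphisms''); note the paper itself gives no proof of this statement, only the citation. Your necessity direction is also sound: the minimum genus of $PSL_2(\mathbb{F}_7)$ is $3$ (the Klein quartic, signature $(0;2,3,7)$), so any genuine signature forces the Riemann--Hurwitz quantity, which is the covering genus, to be at least $3$.

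However, there is a genuine gap in your sufficiency argument, and it is self-inflicted: your treatment of $h\geq 1$ proves too much. You assert that for $h\geq 1$, perfectness of $G$ lets the hyperbolic generators absorb any prescribed product of elliptic images while also arranging generation, so that ``these cases reduce to the mere existence of elements of the needed orders.'' Applied to the tuple $(1;2)$, this argument would produce a generating pair $(a,b)$ of $G$ with $[a,b]$ an involution, i.e.\ it would certify $(1;2)$ as a signature --- contradicting precisely the exception that the theorem asserts and that you yourself undertake to prove in the necessity direction. The flaw is that for $h=1$, $r=1$ the single commutator $[a,b]$ must simultaneously equal the inverse of the elliptic image \emph{and} come from a pair that generates $G$: Ore-type results give that every element of $PSL_2(\mathbb{F}_7)$ is a commutator, but not that it is a commutator of a \emph{generating} pair, and for involutions this stronger statement is false. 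So the genus-one cases with few branch points (in particular $r=1$ with $m_1\in\{2,3,4,7\}$, and $r=2$ with small orders) cannot be waved away; they need the same Frobenius-count-minus-proper-subgroup analysis you reserve for $h=0$, and it is exactly that computation which separates $m_1=2$ (no surface-kernel epimorphism) from $m_1\in\{3,4,7\}$ (epimorphisms exist). Until the $h=1$ stratum is handled this way, your proof is not merely incomplete but, as written, inconsistent with the statement being proved.
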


\begin{theorem}\label{2}\cite{kundu1}
	$ ( h;2^{[a_{2}]}, 3^{[a_{3}]}, 5^{[a_{5}]}, 6^{[a_6]} 11^{[a_{11}]} ) $ is a signature of $ PSL_2(\mathbb{F}_{11}) $ if and only if $$  1+660(h-1)+ 165a_{2} + 220a_{3} + 264a_{5} + 275a_6 +300a_{11} \geq 26 .$$
\end{theorem}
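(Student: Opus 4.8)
The plan is to convert realizability of a signature into the existence of a surface kernel epimorphism and then handle the two implications separately. First I record the arithmetic: $|PSL_2(\mathbb{F}_{11})|=660$, its element orders are exactly $\{1,2,3,5,6,11\}$ (so the only admissible periods are $2,3,5,6,11$), and the Riemann--Hurwitz formula makes the left-hand side of the displayed inequality equal to the genus $g$; thus the assertion reads ``the signature is realizable $\iff g\ge 26$.'' By Riemann's existence theorem, together with the Nielsen realization already invoked in the introduction, $(h;2^{[a_2]},3^{[a_3]},5^{[a_5]},6^{[a_6]},11^{[a_{11}]})$ is a signature of $G$ precisely when there is a generating vector
$$(\alpha_1,\beta_1,\dots,\alpha_h,\beta_h,\gamma_1,\dots,\gamma_r)$$
in $G$ whose $\gamma_i$ have the prescribed orders, which satisfies $\prod_{j=1}^h[\alpha_j,\beta_j]\prod_{i=1}^r\gamma_i=1$, and which generates $G$.

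For the necessity direction I would argue that a realizable signature forces $g\in Sp(G)$, so it suffices to show $\mu(G)=26$. Since each period contributes a positive coefficient and $h\ge 2$ already yields $g\ge 661$, the signatures with $g<26$ are few: they all have $h\le 1$, and when $h=0$ the orbifold is spherical or Euclidean (the triangle types $(2,3,5),(2,3,6),(3,3,3)$ and the quadrilateral type $(2,2,2,2)$, giving $g\le 1$). Each such candidate has a virtually solvable orbifold group and therefore admits no epimorphism onto the non-abelian simple, hence perfect, group $G$; the free case $(1;-)$ is excluded because $\mathbb{Z}^2$ has no simple quotient. This leaves $(0;2,3,11)$, of genus exactly $26$, as the minimal realizable signature.

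The sufficiency direction is the substance. I would first establish a finite family of base signatures via Frobenius's class-multiplication count
$$\#\{(\gamma_1,\dots,\gamma_r):\gamma_i\in C_i,\ \gamma_1\cdots\gamma_r=1\}=\frac{|C_1|\cdots|C_r|}{|G|}\sum_{\chi}\frac{\chi(c_1)\cdots\chi(c_r)}{\chi(1)^{r-2}},$$
evaluated against the character table of $PSL_2(\mathbb{F}_{11})$, whose irreducible degrees are $1,5,5,10,10,11,12,12$. Positivity of this count produces product-one tuples in the chosen classes; to upgrade one of them to a \emph{generating} tuple I would subtract, by inclusion--exclusion, the tuples lying inside the maximal subgroups $A_5$, $11{:}5$ and $D_{12}$. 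The minimal case is especially clean here: no maximal subgroup contains elements of orders $2$, $3$ and $11$ simultaneously (the only maximal subgroup with order-$11$ elements is $11{:}5$, which has no involutions or order-$3$ elements), so every product-one triple of type $(2,3,11)$ automatically generates $G$, giving $\mu(G)=26$ outright.

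Finally I would propagate these seeds to every admissible lattice point $(h;a_2,a_3,a_5,a_6,a_{11})$ with $g\ge 26$ by two climbing moves: a handle move $(h;\dots)\mapsto(h+1;\dots)$, always available because one may append a trivial commutator, e.g.\ $\alpha_{h+1}=\beta_{h+1}$, without disturbing generation (raising $g$ by $660$); and period-adjustment moves that change the multiplicities $a_m$ by rewriting a product of class representatives using the remaining freedom in the generating vector. The main obstacle I anticipate is precisely this last bookkeeping together with the generation check: one must guarantee that for \emph{every} admissible period vector some solution of the product-one equation escapes all maximal subgroups, and that the climbing moves, anchored at an adequate supply of base cases at the minimal feasible $h$ of each period stratum, reach every lattice point above $g=26$ with no gaps. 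The genus-$0$ stratum, where the constraints are tightest and the triangle-type cases near the minimum must be checked by hand, is where the careful case analysis will concentrate.
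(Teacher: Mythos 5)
The paper itself gives no proof of this statement: it is quoted from \cite{kundu1}, whose method the introduction summarizes as ``accurate use of Frobenius theorem and explicit formation of surface kernel epimorphisms'' --- which is precisely the strategy you outline, so your general approach is the right one. Your necessity direction is essentially sound, though the justification has a repairable slip: ``virtually solvable'' cannot be the operative property (every finite group is virtually solvable, and the $(2,3,5)$ triangle group \emph{is} the non-abelian simple group $A_5$); what actually kills the sub-$26$ signatures is that spherical triangle groups are finite of order at most $60<660$ and Euclidean signature groups have an abelian normal subgroup of index at most $12$ in every finite quotient. Your enumeration is also incomplete as stated (you omit the triples $(2,2,m)$, $(2,3,3)$ and all $r\le 2$ cases, and your list of maximal subgroups omits the dihedral group $D_{10}$ of order $p-1$), but these are covered by the same argument. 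The identification of $(0;2,3,11)$ as the minimal signature, via the observation that only $11{:}5$ contains elements of order $11$ and it contains none of order $2$ or $3$, is correct and clean.

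The genuine gap is the sufficiency direction, which is the actual content of the theorem and which you explicitly defer as ``bookkeeping.'' Your two climbing moves only increment $h$ by $1$ or increment a single multiplicity $a_m$ by $2$ (appending $c,c^{-1}$), so from the seed $(0;2,3,11)$ they reach only the sublattice $a_2$ odd, $a_3$ odd, $a_{11}$ odd, $a_5,a_6$ even; signatures such as $(1;2)$, $(0;2,2,3,11)$ or $(0;3,3,5,5)$, all of which satisfy $g\ge 26$ and hence are asserted by the theorem to be realizable, are unreachable. A complete proof therefore needs a base case in every parity class and every small stratum, each verified individually by the Frobenius count together with inclusion--exclusion over \emph{all} maximal subgroups ($D_{10}$, $D_{12}$, $11{:}5$, and both classes of $A_5$) --- and these boundary verifications are exactly where such theorems can fail: for $PSL_2(\mathbb{F}_7)$ the analogous statement (Theorem \ref{1} of this paper) has the exceptional signature $(1;2)$, whose genus satisfies the inequality yet which admits no surface kernel epimorphism. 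Proving that no such exception occurs for $p=11$ is the substance of the cited proof, and your proposal, as written, establishes $\mu(G)=26$ and the necessity direction but not this.
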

and the following lemma;
\begin{lemma}\label{3}\cite{kundu2}
	$(h_{\geq ~ 0};~ 2^{[a_2]},~ 3^{[a_3]},~ 4^{[a_4]},~ 5^{[a_5]},~ d^{[a_d]},~ \frac{p-1}{2}^{[a_{\frac{p-1}{2}}]},~ \frac{p+1}{2}^{[a_{\frac{p+1}{2}}]},~ p^{[a_p]})$ is a signature for $PSL_2(\mathbb{F}_p)$ for $p ~ \equiv ~ 3 ~ (mod ~ 4)$ if and only if $$2(h-1)+~\frac{a_2-1}{2}~ + \frac{2a_3-1}{3} + ~ \frac{3a_4}{4} +~ \frac{4a_5}{5} +~ \frac{(d-1)a_d+1}{d} ~+ \frac{a_{\frac{p-1}{2}}(p-3)}{p-1} ~+ \frac{a_{\frac{p+1}{2}}(p-1)}{p+1} $$ $$+\frac{(p-1)a_p}{p} ~ \geq 0 \text{ or }$$  $$20(h-1) ~ + 10[\frac{a_2}{2} ~ +\frac{2.a_3}{3} ~+\frac{3.a_4}{4} ~+\frac{4.a_5}{5} ~+\frac{(d-1)a_d}{d} ~+\frac{(p-3)a_{\frac{p-1}{2}}}{p-1} ~+$$ $$\frac{(p-1)a_{\frac{p+1}{2}}}{p+1} ~+\frac{(p-1)a_p}{p} ] ~ \geq ~ 1 $$ when $p ~ \geq ~ 13, ~ p \equiv  \pm 1~(\mod ~ 5~),~ p ~ \not \equiv ~ \pm ~ 1(\mod ~ 8), ~ \text{and} ~ d \geq 15$. Here $$d:=min\lbrace e|e\geq 7 \text{ and either } e|\frac{p-1}{2} \text{ or } e|\frac{p+1}{2} \rbrace.$$
\end{lemma}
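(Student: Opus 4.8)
The plan is to translate the statement into the language of surface kernel epimorphisms and then study realizability through the conjugacy class structure of $G = PSL_2(\mathbb{F}_p)$. Recall that $(h; m_1, \ldots, m_r)$ is a signature of $G$ precisely when there is a Fuchsian group $\Gamma$ of that signature and an epimorphism $\phi \colon \Gamma \to G$ preserving the orders of the elliptic generators; equivalently, there exist $a_1, b_1, \ldots, a_h, b_h \in G$ together with elements $x_1, \ldots, x_r$ of orders $m_1, \ldots, m_r$ satisfying $\prod_{j}[a_j,b_j]\prod_i x_i = 1$ and $\langle a_j, b_j, x_i \rangle = G$. The genus $g$ of $\Sigma_g$ is then forced by Riemann-Hurwitz, and the two displayed inequalities are exactly that formula rearranged: each asserts that the genus induced by $(h; m_1, \ldots, m_r)$ lies above the minimum-genus threshold, the two forms corresponding to the two extremal seed signatures identified in the minimum-genus analysis of \cite{ming1, ming2}.

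First I would dispatch necessity. The admissible periods $2, 3, 4, 5, d, \frac{p-1}{2}, \frac{p+1}{2}, p$ are (representatives of) the element orders of $G$ allowed under the stated congruences, so feeding them into Riemann-Hurwitz produces the quantities appearing in the inequalities, and the fact that a faithful action of a nonabelian simple group forces the genus to meet that bound (after discarding the few degenerate quotient signatures) yields the stated lower bounds. The arithmetic hypotheses enter here only to fix the local structure of $G$: the condition $p \equiv \pm 1 \pmod 5$ guarantees that $A_5$, and hence an element of order $5$, embeds in $G$, while $p \not\equiv \pm 1 \pmod 8$ determines whether $S_4$ and an element of order $4$ occur; together with $p \geq 13$ and $d \geq 15$ they certify that the listed periods are exhaustive and that $d$ is genuinely the next available large order.

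For sufficiency I would argue that the realizable locus is an \emph{up-set} and then exhibit its seeds. The two standard surgeries are decisive: if a signature is realized, then so is the one obtained by increasing $h$ by one (adding a handle) and the one obtained by appending a pair of equal periods $m,m$, or more generally by raising a single multiplicity $a_m$ while staying inside the class structure of $G$. Granting these, the entire region cut out by the inequalities is realizable once its minimal signatures are, so the problem collapses to finitely many seeds. To realize a seed I would invoke the Frobenius class-equation count: the number of tuples $\prod_j[a_j,b_j]\prod_i x_i = 1$ with $x_i$ in prescribed classes is a character sum over $\mathrm{Irr}(G)$, and since the character table of $PSL_2(\mathbb{F}_p)$ is explicit this count is seen to be positive throughout the required range, exactly as in the treatment of $p = 7, 11$ underlying Theorems \ref{1} and \ref{2}.

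The main obstacle is the generation condition: a positive Frobenius count yields a product-one tuple but not necessarily one generating $G$ rather than a proper subgroup. I would excise the unwanted solutions by inclusion-exclusion over the maximal subgroups of $PSL_2(\mathbb{F}_p)$ — the Borel subgroup, the dihedral subgroups of orders $p-1$ and $p+1$ (normalizers of the cyclic subgroups of orders $\frac{p-1}{2}, \frac{p+1}{2}$), and the sporadic subgroups $A_4$ and $A_5$, noting that the hypothesis $p \not\equiv \pm 1 \pmod 8$ removes $S_4$ from this list. Each subgroup's contribution is bounded by its order relative to $|G|$, and a period such as $p$ or $d$ cannot lie in most of them, so the stated hypotheses are precisely what controls which maximal subgroups can absorb the chosen periods and keeps the subtracted terms small. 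Showing that the net count stays positive uniformly in $p$ is where the real difficulty lies; the remainder is bookkeeping with Riemann-Hurwitz and the two surgeries.
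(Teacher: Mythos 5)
You should be aware that this paper never proves Lemma \ref{3} at all: it is quoted, with citation, from \cite{kundu2}, and the only methodological indication given here is the remark that the results of \cite{kundu1,kundu2} were obtained by ``accurate use of Frobenius theorem and explicit formation of surface kernel epimorphisms.'' Measured against that description, your outline follows the same strategy there is to follow --- Harvey's criterion to translate signatures into surface-kernel epimorphisms, Riemann--Hurwitz for necessity, Frobenius character-sum counting with inclusion--exclusion over the maximal subgroups (Borel, dihedral, $A_4$, $A_5$, with $S_4$ excluded by $p \not\equiv \pm 1 \pmod 8$) for sufficiency --- so there is no divergence of approach to report, only a question of whether your sketch closes the argument.

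It does not, and the gaps are concrete. First, your claim that realizability is an up-set under ``raising a single multiplicity $a_m$'' is not automatic: adding a handle or a pair of equal periods $m,m$ is a genuine surgery (send the new handle generators to the identity, the new elliptic pair to $x, x^{-1}$), but inserting a \emph{single} period $m$ requires factoring some element of order $m_i$ as a product of an element of order $m_i$ with an element of order $m$, and the failure of exactly such patches is the source of exceptional signatures --- e.g.\ $(1;2)$ for $PSL_2(\mathbb{F}_7)$ in Theorem \ref{1}. An ``if and only if'' statement like Lemma \ref{3} must locate every such exception, and your outline has no mechanism for certifying that none occur in the stated range. Second, the decisive quantitative step --- that the Frobenius count minus the maximal-subgroup contributions stays positive for every signature satisfying the inequality, uniformly in $p$ --- is essentially the whole content of the lemma, and you explicitly defer it; the finite set of seed signatures is never identified, let alone realized. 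Third, your necessity discussion misreads the arithmetic: for $p \equiv 3 \pmod 4$ and $p \not\equiv \pm 1 \pmod 8$ one has $p \equiv 3 \pmod 8$, so $8 \nmid p^2-1$ and $PSL_2(\mathbb{F}_p)$ has \emph{no} elements of order $4$; the period $4^{[a_4]}$ in the statement is therefore vacuous (any realizable signature forces $a_4=0$), and a careful proof must say so rather than treat $4$ as an available order. In short: the right program, matching what the paper attributes to \cite{kundu2}, but a program rather than a proof.
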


\noindent Having the details knowledge of the spectrum of the group $PSL_2(\mathbb{F}_p)$ one would like to address the following question:\\
\noindent \textbf{What is the stable upper genus for each of the group $PSL_2(\mathbb{F}_p)$ for $p\equiv 3~(mod ~4)$?} In \cite{kundu1}, we find out the stable upper genus for the group $PSL_2(\mathbb{F}_7)$ is 399 and the stable upper genus for the group $PSL_2(\mathbb{F}_{11})$ is 3508  using generic programming techniques \cite{ipython,pandas,matplotlib,numpy}. Following a similar approach described in \cite{kundu1}, here we will largely extend the scenario for higher prime numbers and determine the stable upper genus value for the each of the members of the family of finite groups $PSL_2(\mathbb{F}_p)$ for $p \equiv 3~(mod~4)$. Interestingly, the novelty of this work is the observance of the exponential curve fitting for the stable upper genus values of $PSL_2(\mathbb{F}_p)$ for $p\equiv 3~(mod~4)$ which has not been seen in earlier cases \cite{kulkarni,kundu1}.  \\
\noindent Here we have stated the main result of this paper as follows:\\
\noindent \begin{theorem} \label{main}
	The stable upper genus value of the group $PSL_2(\mathbb{F}_p)$ can be written in the form
	\begin{equation}
		g=a p^b e^{c\times p},
		\label{g_exp}
	\end{equation}
	where $a$, $b$ and $c$ are constants discussed in the proof and $g$ represents the upper stable genus of the group $PSL_2(\mathbb{F}_p)$ while $p$ is the respective prime for $p \equiv 3 ~(mod ~4)$.
\end{theorem}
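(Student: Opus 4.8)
The plan is to reduce the determination of the stable upper genus to a purely arithmetic reachability problem governed by Lemma~\ref{3}, and then to extract the functional form \eqref{g_exp} by a fitting procedure over a sufficiently long list of primes. First I would fix $G = PSL_2(\mathbb{F}_p)$ with $|G| = \tfrac{p(p^2-1)}{2}$ and rewrite the Riemann--Hurwitz relation as
\begin{equation*}
g = 1 + |G|(h-1) + \frac{|G|}{2}\sum_{i=1}^{r}\left(1 - \frac{1}{m_i}\right),
\end{equation*}
so that an integer $g$ lies in $Sp(G)$ precisely when there exist nonnegative integers $h, a_2, a_3, a_4, a_5, a_d, a_{(p-1)/2}, a_{(p+1)/2}, a_p$ realizing $g$ through this equation while simultaneously satisfying the two inequality regimes of Lemma~\ref{3}. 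This turns membership in the spectrum into an integer-feasibility question in the branching multiplicities, with the admissible stabilizer orders drawn from the divisors of $p$, $\tfrac{p-1}{2}$, and $\tfrac{p+1}{2}$.

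Next I would observe that, once $h$ is allowed to increase and branch points of each admissible order $m$ may be freely appended, the reachable genera above the minimum genus $\mu(G)$ form (up to an additive constant) a numerical semigroup whose generators are $|G|$ together with the increments $\tfrac{|G|(m-1)}{2m}$ ranging over $m \in \{2,3,4,5,d,\tfrac{p-1}{2},\tfrac{p+1}{2},p\}$. The stable upper genus is then the Frobenius-type threshold of this semigroup, intersected with the feasibility constraints of Lemma~\ref{3}: the least $g_0$ such that every $g \ge g_0$ admits a valid signature and hence lies in $Sp(G)$. I would compute $g_0 = g(p)$ explicitly for each prime $p \equiv 3 \pmod 4$ in a range long enough to stabilise the fit, using the generic-programming pipeline of \cite{ipython,pandas,matplotlib,numpy} already deployed in \cite{kundu1} for $p = 7, 11$. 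With the data $\{(p, g(p))\}$ in hand, I would pass to logarithms, writing $\ln g = \ln a + b\ln p + c\,p$, and perform a linear least-squares regression in the two features $\ln p$ and $p$ to recover the constants $a$, $b$, $c$ promised in the statement.

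The hard part will not be the reduction but the justification of the exponential ansatz itself. A priori the Frobenius threshold of a semigroup with generators of size $\Theta(p^3)$ grows only polynomially in $p$, so one must argue that the arithmetic of the admissible orders --- in particular the appearance of the auxiliary order $d = \min\{e \ge 7 : e \mid \tfrac{p-1}{2} \text{ or } e \mid \tfrac{p+1}{2}\}$ and the greatest-common-divisor structure among the generators --- is what forces the genuinely exponential growth recorded in \eqref{g_exp}, rather than a polynomial envelope that merely mimics an exponential over the tested range. I would address this by monitoring the stability of the exponential rate $c$ as the prime range is extended, and by confirming that the residuals of the log-linear fit remain bounded, thereby pinning down \eqref{g_exp} as the correct asymptotic form rather than an artefact of a finite sample. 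This distinction between a true exponential and a polynomial surrogate is, in my view, the central obstacle, and it is exactly the novelty flagged in the introduction as absent from the earlier cases of \cite{kulkarni,kundu1}.
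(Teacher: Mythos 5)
Your proposal is, in substance, the paper's own proof. The paper obtains the three new data points exactly as you describe: Theorems \ref{19}, \ref{23}, \ref{31} compute the stable upper genus for $p=19,23,31$ by a Python search over the bounded region of $(h,a_2,a_3,\dots)$ permitted by Lemma \ref{3}, and the stabilization step in the proof of Theorem \ref{19} is precisely your numerical-semigroup observation in its simplest form (a window of $855$ consecutive realizable genera is extended by repeatedly adding the order-$2$ increment $855=\frac{|G|}{4}$). These values are then combined with $g=399,3508$ for $p=7,11$ from \cite{kundu1} and fitted to $g=ap^{b}e^{cp}$. The only procedural difference is the fitting itself: you propose a single log-linear least-squares regression in the features $\ln p$ and $p$, whereas the paper fits interactively with Mathematica's Manipulate and is forced into \emph{two} parameter regimes, $\{a=4.5,b=0.5,c=0.5\}$ for $p\in\{7,11\}$ and $\{a=0.5,b=0.5,c=0.51\}$ for $p\in\{19,23,31\}$, adopting only the second as the general formula (Eq. \ref{g_2}). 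Your single global fit is cleaner but would not reproduce the paper's Table \ref{table1}.

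The obstacle you flag as the hard part is a genuine gap, and neither your remedy nor the paper closes it; in fact your own Frobenius observation, pushed one step further, shows the gap cannot be closed. The genus increments $\frac{|G|}{2}\bigl(1-\frac{1}{m}\bigr)$ are positive integers of size $\Theta(p^{3})$ whose gcd is $1$ (this is what makes the spectrum cofinite at all), so the Frobenius-type threshold, and hence the stable upper genus, is bounded above by a polynomial in $p$ of degree at most $6$. Consequently no expression of the form $ap^{b}e^{cp}$ with $c>0$ can describe the stable upper genus for all $p\equiv 3\pmod 4$: the exponential must overtake the polynomial bound at some explicit prime (already around $p=43$ the prediction of Eq. \ref{g_2} exceeds the crude pairwise Frobenius bound, and the paper's extrapolated value $g\approx 4.49\times 10^{13}$ at $p=59$ is far beyond it). Your proposed validation --- monitoring the stability of $c$ and the residuals as more primes are added --- is an empirical procedure that can never certify an asymptotic form, and here it would eventually refute it. The paper is no stronger on this point: its proof of Theorem \ref{main} is exactly the five-point fit, presented as a ``rough estimate,'' with no attempt to justify the ansatz. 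So your proposal faithfully reproduces the paper's argument together with its fundamental defect; a correct theorem would either restrict the claim to the finitely many computed primes or replace the exponential ansatz by a polynomial bound in $p$ derived from the Frobenius threshold you identified.
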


\noindent Implementing computations with loops over large variations of $h$ and $a_i$ [\ref{1},\ref{2},\ref{3}] by means of Python coding \cite{ipython,pandas,numpy}, we find a set of stable upper genus values of $PSL_2(\mathbb{F}_p)$ for $p\in\{7,11,19,23\}$ which we discuss in the following sections. Based on the set of stable upper genus values, we construct a mathematical function described in Eq. \ref{g_exp}, which follows the variation in the stable upper genus values of $PSL_2(\mathbb{F}_p)$ with the respect to $p$. We discuss the detailed comparison of the expression in Eq. \ref{g_exp} with the dependency of the stable upper genus on $p$ in the proof. To explore the possibility of obtaining a mathematical function describing the stable upper genus as a function of $p$ for the group $PSL_2(\mathbb{F}_p)$, we make use of the curve-fitting technique on Mathematica \cite{mathematica} following from Fit and Manipulate tool, which provides us with the best fit on the data set of the stable upper genus corresponding to respective prime $p\in\{7,11,19,23\}$. We have specifically considered the function type for the stable upper genus as 
\begin{equation}
	g=a p^b \exp[cp],
\end{equation}
where $a$, $b$ and $c$ are constants that are obtained based on the best fit on the data-set and $p$ is the prime following $p\equiv 3~(mod~4)$. This expression subsequently provides us an estimate along with upper bound of stable upper genus of the group $PSL_2(\mathbb{F}_p)$ for general $p\equiv 3~(mod~4)$.
\noindent We have organized our paper in the following way. In chapter 2 we will study the necessary preliminary results. In most cases, we will state the theorems without proof. In chapter 3, we will prove our main Theorem [\ref{main}].  

\section{preliminaries}
\noindent In this section, we will collect the knowledge about the properly discontinuous actions of a group $G$ on any Riemann surface $\Sigma_g$, signature of a finite group, the family of groups $PSL_2(\mathbb{F}_p)$ for a prime $p$, curve fitting, exponential fitting.

\noindent We start with the definition of properly discontinuous action of a finite group on a Riemann surface.
\begin{definition}\cite{sve}
	Let $G$ be a finite group is said to act on a Riemann surface $\Sigma_g$ properly discontinuously if for any $x\in  \Sigma_g$ there exists a neighbouhood $U$ of $x$ in $X$ such that $g(U)\cap U=\emptyset$ for only finitely many $g\in G$.  
\end{definition}
\subsection{Fuchsian group}
A discrete subgroup of the Fuchsian group is known as Fuchsian group \cite{sve}.
\begin{theorem}\cite{sve}
	A group $\Gamma$ is a Fuchsian group if and only if $\Gamma$ acts on the upper half plane $\mathbb{H}$ properly discontinuously.
\end{theorem}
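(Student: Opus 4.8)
The plan is to use the standard identification of a Fuchsian group with a discrete subgroup of $PSL_2(\mathbb{R})$, the full group of orientation-preserving isometries of the hyperbolic metric on $\mathbb{H}$, and to exploit the geometry of this isometric action. I would first record the facts I intend to use: $PSL_2(\mathbb{R})$ acts on $\mathbb{H}$ by Möbius transformations $z \mapsto (az+b)/(cz+d)$ with $ad-bc=1$; this action is transitive, isometric for the hyperbolic metric, and the stabilizer of the basepoint $i$ is the compact group $PSO(2)$. The single analytic input I would isolate is the distance formula
\begin{equation}
\cosh d(i, g\cdot i) = \tfrac{1}{2}\left(a^2 + b^2 + c^2 + d^2\right),
\end{equation}
which ties the hyperbolic displacement of $i$ to the Euclidean norm of a matrix representative of $g$.

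For the easier implication (properly discontinuous $\Rightarrow$ discrete) I would argue by contraposition. If $\Gamma$ is not discrete, I choose a sequence of distinct elements $\gamma_n \to \mathrm{id}$ in $PSL_2(\mathbb{R})$. For any $x \in \mathbb{H}$ the continuity of the action gives $\gamma_n x \to x$, so every neighborhood $U$ of $x$ contains $\gamma_n x$ for all large $n$; since then $\gamma_n x \in \gamma_n U \cap U$, infinitely many group elements satisfy $\gamma_n U \cap U \neq \emptyset$, contradicting proper discontinuity at $x$. Hence proper discontinuity forces discreteness.

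The substance is the forward implication (discrete $\Rightarrow$ properly discontinuous), and the key step — which I expect to be the main obstacle — is the compactness lemma: for every compact $K \subset \mathbb{H}$ the set $E_K := \{g \in PSL_2(\mathbb{R}) : gK \cap K \neq \emptyset\}$ is compact. To prove it I would set $R := \sup_{z \in K} d(i, z) < \infty$ and, given $g \in E_K$ with $gz \in K$ for some $z \in K$, use the triangle inequality together with the isometry property,
\begin{equation}
d(i, g\cdot i) \le d(i, gz) + d(gz, g\cdot i) = d(i, gz) + d(z, i) \le 2R,
\end{equation}
to bound the displacement of the basepoint uniformly on $E_K$. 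By the distance formula this bounds $a^2+b^2+c^2+d^2$, so $E_K$ lifts to a bounded subset of the closed set $SL_2(\mathbb{R}) \subset M_2(\mathbb{R})$ and is therefore relatively compact; continuity of the action shows $E_K$ is closed, hence compact. The delicate points to get right here are the uniformity of the bound over all of $E_K$ and the passage between the quotient $PSL_2(\mathbb{R})$ and its matrix lift.

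Finally I would assemble the forward implication: if $\Gamma$ is discrete, then for any $x \in \mathbb{H}$ I take a compact neighborhood $K = \overline{U}$ of $x$; the set $\{\gamma \in \Gamma : \gamma U \cap U \neq \emptyset\}$ is contained in $\Gamma \cap E_K$, which is the intersection of a discrete group with a compact set and hence finite. This is exactly proper discontinuity at $x$, completing the proof.
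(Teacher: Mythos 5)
Your proof is correct and follows essentially the same route as the paper's source for this statement: the paper gives no argument of its own, only the citation to Katok \cite{sve}, and your proof (the displacement formula $\cosh d(i,g\cdot i)=\tfrac12(a^2+b^2+c^2+d^2)$, the compactness of $E_K=\{g: gK\cap K\neq\emptyset\}$ via the $2R$ triangle-inequality bound, and the contrapositive for the easy direction) is precisely the standard argument from that reference. The one step you leave implicit is that a discrete subgroup of $PSL_2(\mathbb{R})$ is closed, which is needed for ``discrete group intersected with a compact set is finite'' (a merely discrete \emph{subset} can accumulate outside itself, as $\{1/n\}$ does in $[0,1]$); this is standard and easily supplied, e.g.\ by noting that if $\Gamma\cap E_K$ were infinite one could extract distinct $\gamma_n\to g$ in the compact set $E_K$, whence $\gamma_n^{-1}\gamma_{n+1}\to \mathrm{id}$ contradicts discreteness of $\Gamma$.
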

\begin{definition}
	A Fuchsian group $\Gamma$ is said to be co-compact Fuchsian group if $\mathbb{H}/\Gamma$ is compact.
\end{definition} 
\subsection{Dirichlet Region}
Let $\Gamma$ be a Fuchsian group acts on the upper half plane $\mathbb{H}$. Let $p \in \mathbb{H}$ be a point which is not fixed by any non identity element of $\Gamma \setminus \lbrace id \rbrace.$ The Dirichlet region center at $p$ for $\Gamma$ is defined as $$D_p(\Gamma)=\lbrace z\in \mathbb{H}|\rho(z,p)\leq \rho(z,T(p)) ~ \forall T\in \Gamma \setminus \lbrace id \rbrace \rbrace$$
\noindent Here $\rho$ is the usual hyperbolic metric. \begin{theorem}
	The Dirichlet region $D_p(\Gamma) $is a connected region of $\Gamma$ if $p$ is not fixed by any element of $\Gamma \setminus \lbrace id \rbrace . $
\end{theorem}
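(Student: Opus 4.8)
The plan is to prove connectedness by establishing the stronger property that $D_p(\Gamma)$ is geodesically convex, since any geodesically convex subset of $\mathbb{H}$ is automatically path-connected: the geodesic segment joining any two of its points furnishes a path lying inside the set. So the whole argument reduces to proving convexity.

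First I would rewrite the Dirichlet region as an intersection of hyperbolic half-planes. For each $T\in\Gamma\setminus\lbrace id\rbrace$ set
$$H_p(T):=\lbrace z\in\mathbb{H}\mid \rho(z,p)\leq\rho(z,T(p))\rbrace,$$
so that directly from the definition $D_p(\Gamma)=\bigcap_{T\neq id}H_p(T)$. Here the hypothesis that $p$ is fixed by no nontrivial element of $\Gamma$ is exactly what is needed: it guarantees $T(p)\neq p$ for every such $T$, so each pair $\lbrace p,T(p)\rbrace$ consists of two distinct points and the perpendicular bisector separating them is genuinely defined and nondegenerate.

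Second, I would identify each $H_p(T)$ as a closed half-plane. The key geometric fact is that the equidistant locus $L_T:=\lbrace z\mid\rho(z,p)=\rho(z,T(p))\rbrace$ is a hyperbolic geodesic, and that $H_p(T)$ is precisely the closed side of $L_T$ containing $p$. The cleanest way to see this is to apply an isometry of $\mathbb{H}$ carrying $p$ and $T(p)$ to a symmetric position; the equidistant set is then visibly a geodesic by the reflection symmetry interchanging the two points, and since isometries send geodesics to geodesics, $L_T$ is a geodesic in general. Convexity of the half-plane then follows because two distinct geodesics of $\mathbb{H}$ meet in at most one point, so a geodesic joining two points on the same side of $L_T$ cannot cross $L_T$ and hence stays in $H_p(T)$.

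Finally I would assemble the pieces: an arbitrary intersection of geodesically convex sets is again geodesically convex, so $D_p(\Gamma)=\bigcap_{T\neq id}H_p(T)$ is convex, hence connected. The step requiring the most care is the second one, verifying that the equidistant locus is exactly a geodesic and pinning down $H_p(T)$ as a half-plane, since this is where the hyperbolic geometry genuinely enters and where the non-fixing hypothesis is used. One should also note that a Fuchsian group $\Gamma$ may be infinite, so the intersection may run over infinitely many half-planes; this causes no trouble, because convexity, and therefore connectedness, is preserved under arbitrary intersections.
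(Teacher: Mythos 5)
Your proof is correct and takes essentially the same route as the paper's proof, which is simply a citation to Katok \cite{sve}: there the Dirichlet region is written as the intersection $\bigcap_{T\neq id}H_p(T)$ of closed hyperbolic half-planes, each geodesically convex, so the intersection is convex and hence (path-)connected. Your elaboration of the key step --- that the equidistant locus $\lbrace z : \rho(z,p)=\rho(z,T(p))\rbrace$ is a geodesic, well-defined because the non-fixing hypothesis gives $T(p)\neq p$ --- matches the standard argument, so there is nothing to add.
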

\begin{proof}
	\cite{sve}.
\end{proof}
\begin{theorem}
	Any two distinct points that lie inside the Dirichlet region will belong to two different $\Gamma$ orbits.
\end{theorem}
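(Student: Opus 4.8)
The plan is to argue by contradiction, exploiting the single essential fact that every element of $\Gamma$ acts on $\mathbb{H}$ as an isometry of the hyperbolic metric $\rho$. First I would fix the interpretation of ``inside'': a point $z$ lies in the interior of $D_p(\Gamma)$ precisely when all of the defining inequalities are \emph{strict}, that is $\rho(z,p) < \rho(z,T(p))$ for every $T \in \Gamma \setminus \lbrace id \rbrace$. This strictness is exactly what separates an interior point from a boundary point of the Dirichlet region, and it is the hypothesis the whole argument rests on.

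Next, I would suppose toward a contradiction that $z_1$ and $z_2$ are two distinct interior points lying in the same $\Gamma$-orbit. Then there is some $T \in \Gamma$ with $z_2 = T(z_1)$, and necessarily $T \neq id$ since $z_1 \neq z_2$. Applying the interior condition at $z_1$ to the particular element $T^{-1} \neq id$ gives $\rho(z_1,p) < \rho(z_1, T^{-1}(p))$. Because $T$ is a hyperbolic isometry, $\rho(z_1, T^{-1}(p)) = \rho(T(z_1), p) = \rho(z_2, p)$, so that $\rho(z_1, p) < \rho(z_2, p)$.

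By the symmetric step, applying the interior condition at $z_2$ to the element $T \neq id$ yields $\rho(z_2, p) < \rho(z_2, T(p))$, and invariance of $\rho$ under $T^{-1}$ gives $\rho(z_2, T(p)) = \rho(T^{-1}(z_2), p) = \rho(z_1, p)$. Hence $\rho(z_2, p) < \rho(z_1, p)$. Combining the two strict inequalities produces $\rho(z_1,p) < \rho(z_2,p) < \rho(z_1,p)$, which is absurd; therefore no two distinct interior points can be $\Gamma$-equivalent, which is the assertion.

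The proof is short and its only genuine input is that $\Gamma$ acts by isometries, so there is no serious obstacle. The one thing to be careful about is the bookkeeping of which element ($T$ versus $T^{-1}$) is fed into the strict defining inequality at each of the two points, together with the correct use of isometry invariance to transport distances back to the center $p$; getting that pairing right is what closes the contradiction.
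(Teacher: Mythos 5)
Your proof is correct. Note that the paper itself gives no argument here at all: its ``proof'' is simply a citation to Katok's \emph{Fuchsian Groups} \cite{sve}, and the argument you give is precisely the standard one found there --- assume two interior points are $\Gamma$-equivalent via $T \neq id$, feed $T^{-1}$ into the strict inequality at $z_1$ and $T$ into the strict inequality at $z_2$, transport distances back to the center $p$ by isometry invariance, and derive $\rho(z_1,p) < \rho(z_2,p) < \rho(z_1,p)$. The only point you assert rather than prove is the identification of topological interior points with points where all defining inequalities are strict; this is standard (if $\rho(z,p)=\rho(z,T(p))$ for some $T\neq id$, then $z$ lies on the perpendicular bisector of $p$ and $T(p)$, so every neighborhood of $z$ meets the complement of $D_p(\Gamma)$), but a one-line justification would make the argument self-contained.
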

\begin{proof}
	\cite{sve}.
\end{proof}
\noindent Two points $w_1,w_2\in \mathbb{H}$ are said to be congruent if they lie to the same $\Gamma$ orbit. Any two pints in a fundamental region $F$ may be congruent only if the points lie in the boundary of $F$. Let $F$ be a Dirichlet region for a Fuchsian group $\Gamma$. We will consider all congruent vertices of $F$. The congruence is an equivalence relation on the vertices of $F$, the equivalence classes are called the \textbf{cycles}. Let $w\in \mathbb{H}$ be fixed by an elliptic element $T$ of $\Gamma$, then $Sw$ is fixed by $STS^{-1}$. So if one vertex of the cycle is fixed by an elliptic element then all the vertices of the cycle are fixed by the conjugate of the elliptic cycles. Those cycles are called elliptic cycles, and the vertices of the cycles are known as elliptic vertics. The cardinality of the collection of distinct elliptical cycles is same as the of non-congruent elliptic points in the Dirichlet region $F$. \\
\noindent Every non trivial stabilizer of any point in $\mathbb{H}$ is a maximal finite cyclic subgroup of the group $\Gamma$. In this context we have the following theorem.   
\begin{theorem}
	Let $\Gamma$ be a Fuchsian group, and $F$ be a Dirichlet region for $\Gamma$. Let $\alpha_1,\alpha_2, \dots, \alpha_n$ be the internal angles at all congruent vertices of $F$. Let $k$ be the order of the stabilizer in $\Gamma$ of one of the vertices. Then $\alpha_1+\alpha_2+\dots+\alpha_n=\frac{2\pi}{k}$. 
\end{theorem}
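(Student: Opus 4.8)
The plan is to exploit the fact that the translates $\{g(F) : g \in \Gamma\}$ of the Dirichlet region tessellate $\mathbb{H}$ with pairwise disjoint interiors, and in particular that the finitely many translates meeting a fixed vertex $v$ cover a full neighbourhood of $v$ exactly once. First I would fix one vertex $v$ of the cycle and record that, by proper discontinuity together with the local finiteness of the tessellation, only finitely many translates $g(F)$ contain $v$; since these translates tile a small disc around $v$ with disjoint interiors and no gaps, the sum of their interior angles at $v$ must equal $2\pi$.

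The second step is to identify exactly which translates meet $v$ and what angle each contributes. A translate $g(F)$ has $v$ as a vertex precisely when $g^{-1}(v)$ is a vertex of $F$, hence when $g^{-1}(v)$ is one of the congruent vertices $v_1,\dots,v_n$ of the cycle. Moreover, since every $g\in\Gamma$ is a hyperbolic isometry and therefore conformal, the interior angle of $g(F)$ at $v$ equals the interior angle $\alpha_i$ of $F$ at $v_i$ whenever $g^{-1}(v)=v_i$.

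The third step is a counting argument. For each fixed index $i$, I would show that the set of $g\in\Gamma$ with $g(v_i)=v$ is a left coset of the stabilizer of $v_i$, and therefore has exactly $k$ elements; here one uses that congruent vertices have conjugate stabilizers (via $\mathrm{Stab}(\gamma w)=\gamma\,\mathrm{Stab}(w)\,\gamma^{-1}$), so all of them share the common order $k$. Because distinct group elements yield translates with disjoint interiors, this produces exactly $kn$ distinct translates meeting $v$, grouped into $n$ families of size $k$, the $i$-th family each contributing the angle $\alpha_i$. Summing over all translates and equating with the total angle gives $k(\alpha_1+\alpha_2+\cdots+\alpha_n)=2\pi$, which rearranges to the claimed identity $\alpha_1+\alpha_2+\cdots+\alpha_n=\frac{2\pi}{k}$.

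The hard part will be the justification in the first step that the translates meeting $v$ genuinely tile a neighbourhood of $v$ with neither gaps nor overlaps, so that their angle sum is exactly $2\pi$ rather than merely a bound; this rests on the tessellation property of the Dirichlet region and on local finiteness coming from proper discontinuity, and it is the one place where the full strength of ``fundamental domain'' (as opposed to a mere fundamental set) is needed. A secondary point requiring care is verifying that the $kn$ translates are pairwise distinct, which again reduces to the fact that $g(F)$ and $g'(F)$ have disjoint interiors for $g\neq g'$, so equal translates force equal group elements.
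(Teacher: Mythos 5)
Your proposal is correct, and it coincides with the paper's proof in the only sense available: the paper offers no argument of its own for this theorem, deferring entirely to the cited reference (Katok, \emph{Fuchsian Groups}), and your tessellation-plus-coset-counting argument is precisely the standard proof given there. The one step you should make explicit to be fully rigorous is that every translate $g(F)$ containing $v$ contains it as a \emph{vertex} --- i.e., that a point of $F$ congruent to a vertex of $F$ is itself a vertex of $F$ --- which is the standard fact underlying the definition of cycles and is what guarantees your count of $kn$ translates is exhaustive.
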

\begin{proof}
	\cite{sve}.
\end{proof}
\begin{definition}
	The orders of non-conjugate maximal finite cyclic subgroups of the Fuchsian group $\Gamma$ are known as the period of $\Gamma$.
\end{definition}

\subsection{Signature of Fuchsian group}
Let a Fuchsian group $\Gamma$ acts on $\mathbb{H}$. Let the area of the orbit space $\mathbb{H}/\Gamma$ has the finite area $i.e.~\mu(\mathbb{H}/\Gamma)<\infty .$ The restriction of the natural projevtion map $\mathbb{H}\rightarrow \mathbb{H}/\Gamma$ to the Dirichlet region $F$, identifies the congruent points of $F$. So $F/ \Gamma$ is an oriented surface possibly with some marked points  as the congruent points are lying on the boundary of $F$. The marked points are correspond to the elliptic cycles and the cusps are corresponding to the non-congruent vertices at infinity. As a space $\mathbb{H}/\Gamma$ is known as orbifold. The number of cusps and the genus of the orbifold decisive the topology type of the orbifold. The area of $\mathbb{H}/\Gamma$ is defined as the area of the fundamental region $F$. If one Dirichlet region is compact then all the other Dirichlet regions are compact. If a Fuchsin group has a compact Dirichlet region then the Dirichlet region has finitely many sides and the orbifold is also compact. \\
\noindent If a convex fundamental region for a Fuchsian group $\Gamma$ has finitely many sides then the Fuchsian group is known as geometrically finite group. 
\begin{theorem}
	Let $\Gamma$ be a Fuchsian group. If the orbifold $\mathbb{H}/\Gamma$ has finite area then the $\Gamma$ is geometrically finite.
\end{theorem}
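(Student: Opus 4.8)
The plan is to exhibit a convex fundamental region with finitely many sides, and the natural candidate is the Dirichlet region $F = D_p(\Gamma)$ centred at a point $p \in \mathbb{H}$ that is fixed by no nontrivial element of $\Gamma$. By the theorems quoted above, $F$ is a connected, convex fundamental region, and since the projection $\mathbb{H} \to \mathbb{H}/\Gamma$ merely identifies boundary points of $F$, the hyperbolic area of $F$ equals $\mu(\mathbb{H}/\Gamma)$, which is finite by hypothesis. Everything therefore reduces to showing that this convex fundamental polygon has only finitely many sides.

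First I would record the local finiteness of the tessellation $\{T(F) : T \in \Gamma\}$. Because $\Gamma$ acts properly discontinuously, the orbit $\Gamma p$ is discrete, and each side of $F$ lies on the perpendicular bisector of a segment $[p, T(p)]$; hence only finitely many sides meet any compact subset of $\mathbb{H}$. Consequently the vertices of $F$ can accumulate only on the boundary circle $\partial \mathbb{H} = \mathbb{R} \cup \{\infty\}$, so if $F$ had infinitely many sides, infinitely many vertices would escape to $\partial \mathbb{H}$, clustering at finitely many boundary limit points.

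Next I would try to convert finiteness of area into control on the vertices. Writing $F$ as a union of geodesic triangles $T_i$ with common apex $p$ and bases the successive sides $s_i$, the apex angles sum to $2\pi$ and $\mathrm{Area}(F) = \sum_i \mathrm{Area}(T_i) < \infty$. It is important to note that convexity and finiteness of area are by themselves not enough --- a convex region can carry infinitely many arbitrarily thin sides of vanishing area --- so the group structure of $\Gamma$ must enter essentially. The tool I would bring in is the angle-sum theorem quoted above: the congruent vertices organise into cycles whose interior angles sum to $2\pi/k$, where $k$ is the order of the common stabiliser, so that each cycle carries a definite amount of angle that can be played against the Gauss--Bonnet relation $\mathrm{Area}(F) = (n-2)\pi - \sum_i \theta_i$ for an $n$-gon.

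The crux, and what I expect to be the main obstacle, is ruling out an infinite cascade of sides clustering at a boundary limit point. Here I would argue that any such limit point must be the fixed point of a parabolic element, i.e. a cusp of the orbifold, because a free boundary arc, or an accumulation of translates at an ordinary boundary point, would force the area to be infinite. Truncating each cusp by a horocycle cuts off a region of finite area that is a fundamental region for the cyclic parabolic stabiliser acting on a horoball, and such a region is flanked by only finitely many sides of $F$; since finite total area permits only finitely many cusps, and the horoball-truncated remainder is compact and so meets only finitely many sides by local finiteness, $F$ has finitely many sides altogether. This is precisely Siegel's finiteness argument, with the horocyclic cusp analysis as its delicate step; for a fully self-contained account one may instead invoke Siegel's theorem directly, as in \cite{sve}.
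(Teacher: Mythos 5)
The paper offers no argument for this theorem at all: its ``proof'' is the bare citation \cite{sve}, where the statement is Siegel's theorem. Your closing fallback --- invoking \cite{sve} directly --- is therefore the only point of contact with the paper's treatment. Judged as a proof, however, your attempt has a genuine gap precisely at the step you yourself call the crux.

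Your preparatory material (the Dirichlet region as a convex, locally finite fundamental domain; the Gauss--Bonnet bound $\sum_i(\pi-\theta_i)\le \mu(F)+2\pi$ obtained from the triangulation with apex $p$; the cycle angle-sum theorem $\sum_{v\in C}\theta_v = 2\pi/k$) is exactly the machinery of Siegel's proof as presented in \cite{sve}, and it already suffices to finish: every cycle of genuine vertices (including ideal vertices, whose angle is $0$), apart from the degenerate order-two elliptic fixed points lying in the interiors of sides, contributes at least $\pi/3$ to $\sum_i(\pi-\theta_i)$, so there are only finitely many vertices; and since sides and vertices alternate along $\partial F$ (free boundary arcs being excluded by finite area), $F$ has finitely many sides. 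Instead of completing this count, you switch to a horocycle-truncation argument, and there the key assertions --- that every accumulation point of sides on $\partial\mathbb{H}$ is a parabolic fixed point, that each cusp horoball is flanked by only finitely many sides of $F$, and that the truncated region is compact --- are stated, not proved. In the standard development (including \cite{sve}) these cusp facts are theorems established \emph{after} geometric finiteness and using it, so taking them as inputs makes your argument circular as written; proving them independently (e.g.\ via a Margulis-lemma/thick-thin decomposition) is a much heavier route that you do not supply. Relatedly, calling the horocycle argument ``precisely Siegel's finiteness argument'' is a mislabel: Siegel's argument is the angle-counting one you set up and then abandoned. The repair is simply to finish that count.
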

\begin{proof}
	\cite{sve}.
\end{proof}
\begin{definition}{\textbf{(Co-compact Fuchsian group)}}
	A Fuchsian group is said to be co-compact if the orbifold $\mathbb{H}/\Gamma$ is compact topological space.
\end{definition}  
\noindent Let $\Gamma$ be a Fuchsian group and $F$ be a compact Dirichlet region for $\Gamma$. So the number of sides, vertices, and elliptic cycles of $F$ are finitely many. Let $m_1,m_2,\dots,m_r$ be the finite number of periods of $\Gamma$. Hence the orbifold $\mathbb{H}/\Gamma$ is a compact oriented surface of genus $g$ with $r$-many marked points. The tuple $(g;m_1,m_2,\dots,m_r)$ is known as the signature of the Fuchsian group $\Gamma$. 
\subsection{Signature of finite group}
Now we define the signature of a finite group in the sense of Harvey \cite{har}.

\begin{lemma}[Harvey condition]
	\label{Harvey condition}
	A finite group $G$ acts faithfully on $\Sigma_g$ with signature $\sigma:=(h;m_1,\dots,m_r)$ if and only if it satisfies the following two conditions: 
	
	\begin{enumerate}
		
		\item The \emph{Riemann-Hurwitz formula for orbit space} i.e. $$\displaystyle \frac{2g-2}{|G|}=2h-2+\sum_{i=1}^{r}\left(1-\frac{1}{m_i}\right), \text{ and }$$
		
		\item  There exists a surjective homomorphism $\phi_G:\Gamma(\sigma) \to G$ that preserves the orders of all torsion elements of $\Gamma$. The map $\phi_G$ is also known as surface-kernel epimorphism.
	\end{enumerate}
\end{lemma}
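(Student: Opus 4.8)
The plan is to exploit the uniformization of $\Sigma_g$ and of its quotient orbifold by Fuchsian groups, thereby converting the topological statement about actions into a purely group-theoretic statement about torsion-free normal subgroups of a cocompact Fuchsian group. Throughout I would work in the hyperbolic range $g \geq 2$, so that $\mathbb{H}$ is the universal cover of $\Sigma_g$; the boundary cases $g \in \{0,1\}$ follow from the same dictionary using spherical and Euclidean uniformization in place of $\mathbb{H}$, with an identical kernel argument.

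For the forward implication, suppose $G$ acts faithfully on $\Sigma_g = \mathbb{H}/K$, where $K$ is the torsion-free surface group of genus $g$. Since the $G$-action is properly discontinuous and orientation preserving, I would lift it to $\mathbb{H}$: the set of all lifts of elements of $G$, together with the deck group $K$, forms a cocompact Fuchsian group $\Gamma$ containing $K$ as a normal subgroup with $\Gamma/K \cong G$. The quotient $\mathbb{H}/\Gamma = (\mathbb{H}/K)/G = \Sigma_g/G$ is precisely the orbifold $\Sigma_h$ with marked points of orders $m_1,\dots,m_r$, so $\Gamma$ has signature $\sigma$, i.e. $\Gamma = \Gamma(\sigma)$. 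Comparing hyperbolic areas via $\mu(\mathbb{H}/K) = |G|\,\mu(\mathbb{H}/\Gamma)$ and applying Gauss--Bonnet to each side yields the Riemann--Hurwitz identity in condition (1). The quotient map $\phi_G : \Gamma \to \Gamma/K \cong G$ is then the required epimorphism; it preserves torsion orders because each elliptic generator $x_i$ generates a maximal cyclic subgroup of order $m_i$ meeting the torsion-free group $K$ trivially, which forces $\phi_G$ to restrict to an isomorphism on $\langle x_i \rangle$.

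For the converse, I start from the Fuchsian group $\Gamma(\sigma)$, abstractly presented by hyperbolic generators $a_j,b_j$ and elliptic generators $x_i$ subject to $x_i^{m_i}=1$ and $\prod_{j=1}^h [a_j,b_j]\prod_{i=1}^r x_i = 1$. Condition (1) guarantees that the area $2h-2+\sum_{i=1}^r(1-\tfrac{1}{m_i})$ is positive, so this signature is realized by a genuine cocompact Fuchsian group acting on $\mathbb{H}$. Given the surface-kernel epimorphism $\phi_G$, I set $K = \ker\phi_G$, a normal subgroup of index $|G|$. The order-preservation hypothesis makes $K$ torsion-free: every torsion element of $\Gamma(\sigma)$ is conjugate to a power of some $x_i$, and a nontrivial such power cannot lie in $K$ without its image being trivial while its order is a proper divisor of $m_i$, contradicting order preservation. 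A torsion-free cocompact Fuchsian group is a surface group, so $\mathbb{H}/K = \Sigma_{g'}$ is a closed surface on which $G \cong \Gamma/K$ acts faithfully, the faithfulness following from the effectiveness of the $\Gamma$-action on $\mathbb{H}$ together with $K$ being the full kernel. Finally, Riemann--Hurwitz for this action combined with condition (1) forces $g' = g$, giving a faithful $G$-action on $\Sigma_g$ with signature $\sigma$.

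The main obstacle I anticipate is the clean passage between the geometric and group-theoretic sides: in the forward direction, verifying that the lifted group $\Gamma$ is genuinely discrete and cocompact; and in the converse, verifying that $K = \ker\phi_G$ is torsion-free and that the induced quotient action is faithful. All three rest on the single structural fact that torsion in a cocompact Fuchsian group is exhausted, up to conjugacy, by powers of the elliptic generators $x_i$. With this in hand, the order-preservation condition in (2) is exactly the hypothesis needed to sever all torsion when passing to the kernel, and the equivalence reduces to the area comparison encoded in (1).
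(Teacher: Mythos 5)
The paper never proves this lemma: it is stated as imported background with a citation to Harvey \cite{har}, so there is no internal proof to compare against. Your argument is correct and is essentially the canonical proof from that literature: lift the action to $\mathbb{H}$ to realize $G\cong\Gamma/K$ for a cocompact Fuchsian group $\Gamma$ of signature $\sigma$ containing the genus-$g$ surface group $K$ as a normal subgroup, derive condition (1) from area multiplicativity plus Gauss--Bonnet, and get (2) from the quotient map; conversely, use the structural fact that all torsion in $\Gamma(\sigma)$ is conjugate into the cyclic subgroups $\langle x_i\rangle$ to show that order preservation forces $K=\ker\phi_G$ to be torsion-free, hence a surface group of the genus dictated by (1). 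Two glosses deserve tightening. First, to lift the action into $\mathrm{PSL}_2(\mathbb{R})$ you need $G$ to act by hyperbolic isometries, i.e.\ conformally; for a merely topological action this is exactly where the Nielsen realization theorem (invoked in the paper's introduction) enters, and your opening sentence should say so. Second, faithfulness of the induced $G$-action on $\mathbb{H}/K$ does not follow from effectiveness of $\Gamma$ on $\mathbb{H}$ together with ``$K$ being the full kernel'' as stated; the correct one-line argument is that any $\gamma\in\Gamma$ inducing the identity on $\mathbb{H}/K$ is a deck transformation of the regular covering $\mathbb{H}\to\mathbb{H}/K$, whose deck group is precisely $K$ since $K$ acts freely. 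With those repairs, and granting that the degenerate cases $g\in\{0,1\}$ (spherical and Euclidean uniformization) are genuinely parallel but not needed for the paper's hyperbolic applications, your proof is complete and matches the standard route rather than introducing a new one.
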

\begin{corollary}
	Let $Sig(G)$ denote the set of all possible signatures of a finite group $G$, then $Sig(G)$ and $Sp(G)$ have bijective correspondence via the Harvey condition.   
\end{corollary}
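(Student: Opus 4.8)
The plan is to exhibit the correspondence concretely as the map $\Phi : Sig(G) \to Sp(G)$ induced by the Riemann--Hurwitz formula, and then to verify its defining properties directly from the Harvey condition (Lemma \ref{Harvey condition}), which already contains all the mathematical substance. First I would define $\Phi$ on a signature $\sigma=(h;m_1,\dots,m_r)$ by solving the orbit-space identity
$$\frac{2g-2}{|G|}=2h-2+\sum_{i=1}^{r}\left(1-\frac{1}{m_i}\right)$$
for $g$; since $|G|$, $h$ and the $m_i$ are fixed, this pins down a unique integer $g=g(\sigma)$. To see that $\Phi$ actually lands in $Sp(G)$, I would invoke the forward implication of the Harvey condition: because $\sigma\in Sig(G)$, part (2) supplies a surface-kernel epimorphism $\phi_G:\Gamma(\sigma)\to G$, and the kernel of $\phi_G$ is a surface group whose quotient realizes $G$ as an orientation-preserving automorphism group of $\Sigma_{g(\sigma)}$. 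Hence $g(\sigma)\in Sp(G)$ and $\Phi$ is well defined.

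Next I would establish surjectivity of $\Phi$. Given $g\in Sp(G)$, by definition $G$ acts faithfully and orientation-preservingly on $\Sigma_g$; passing to the quotient orbifold $\Sigma_g/G$ and recording its genus $h$ together with the orders $m_1,\dots,m_r$ of the point stabilizers lying over the branch points $c_1,\dots,c_r$ produces a candidate signature $\sigma$ satisfying $g(\sigma)=g$ by construction. The branched-covering structure of $p:\Sigma_g\to\Sigma_g/G$ furnishes exactly the surface-kernel epimorphism demanded by condition (2) of Lemma \ref{Harvey condition}, so $\sigma\in Sig(G)$ and $\Phi(\sigma)=g$. This shows $\Phi$ maps onto $Sp(G)$, so every genus in the spectrum is accounted for by an admissible signature.

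The hard part, and the point at which I would be most careful, is the assertion that the correspondence is \emph{bijective}. The map $\Phi$ is not injective on the nose: distinct signatures, with different base genus $h$ or different period lists, can satisfy the same Riemann--Hurwitz equation and so share a value of $g$. I would therefore read the corollary as asserting that the Harvey condition sets up a well-defined surjective correspondence between the admissible signatures and the spectrum, under which each $g\in Sp(G)$ is matched with the nonempty fibre $\Phi^{-1}(g)$ of signatures realizing it. Genuine bijectivity then holds precisely between $Sp(G)$ and the partition of $Sig(G)$ into the level sets of $\Phi$, and the proof reduces entirely to bookkeeping the two conditions of Lemma \ref{Harvey condition} in both directions, with no further machinery required.
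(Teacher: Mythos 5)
Your proposal is correct, and it is in fact more careful than the paper itself: the paper states this corollary without any proof, treating it as immediate from Lemma \ref{Harvey condition}, and the argument you give---pinning down $g(\sigma)$ via Riemann--Hurwitz, using the surface-kernel epimorphism to realize an action on $\Sigma_{g(\sigma)}$ (so $\Phi$ lands in $Sp(G)$), and recovering a signature from the quotient orbifold to get surjectivity---is precisely the unwinding that the paper leaves implicit. Your key added observation is also right and worth stressing: the natural map $\Phi\colon Sig(G)\to Sp(G)$ is genuinely not injective, so ``bijective correspondence'' cannot be taken literally. A concrete failure occurs inside this very paper: by Theorem \ref{1}, for $PSL_2(\mathbb{F}_7)$ both the unramified signature $(2;-)$ and the signature $(1;2^{[4]})$ are admissible and yield the same genus, since $1+168(2-1)=1+42\cdot 4=169$. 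What the corollary actually needs---and what the later computations such as Theorem \ref{19} actually use---is exactly what you prove: $\Phi$ is a well-defined surjection, equivalently a bijection between $Sp(G)$ and the nonempty fibres $\Phi^{-1}(g)$, so that the spectrum can be computed by enumerating admissible signatures. Your reinterpretation is the correct repair of the statement, and your proof fills a real gap in the exposition rather than deviating from it.
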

\subsection{The family of finite groups $PSL_2(\mathbb{F}_p)$}
Let $p$ be a prime number. The set $$PSL_2(\mathbb{F}_p):=\large\lbrace \begin{pmatrix}
	a & b \\
	c & d 
\end{pmatrix}|~ad-bc=1,~a,b,c,d \in \mathbb{F}_p \large\rbrace/ \pm I$$ forms a group under matrix multiplication. It is a simple linear group generated by two elements, $A=\begin{pmatrix}
	0 & 1 \\
	-1 & 0 
\end{pmatrix}$ of order $2$, and $B=\begin{pmatrix}
	0 & 1 \\
	-1 & -1 
\end{pmatrix}$ of order $3.$ The order of $AB= \begin{pmatrix}
	1 & 1 \\
	0 & 1 
\end{pmatrix}$ is $7, i.e.$ $$PSL_2(\mathbb{F}_p)=\langle A,B|A^2=B^3=(AB)^P \rangle.$$
\begin{theorem}
	Let $p$ be an odd prime. Let $G:=\langle x,y|x^p=y^p=(x^ay^b)^2=1,ab \equiv 1(mod~p) \rangle$
	be a two generator group. Then $G$ is isomorphic $PSL_2(\mathbb{F}_p).$
\end{theorem}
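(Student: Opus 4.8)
The plan is to combine a surjection with an order count: I would construct a surjective homomorphism $\pi : G \to PSL_2(\mathbb{F}_p)$ and then show $|G| \le |PSL_2(\mathbb{F}_p)| = \tfrac{1}{2}p(p^2-1)$. Since a surjection of finite groups with $|G|\le|PSL_2(\mathbb{F}_p)|$ is automatically an isomorphism, this settles the theorem. Note that simplicity of the target is not needed here, which is convenient since for $p=3$ the target $A_4$ is not simple.

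To build $\pi$, I would exhibit explicit elements of $PSL_2(\mathbb{F}_p)$ realizing the presentation. Take
$$X=\begin{pmatrix}1&1\\0&1\end{pmatrix},\qquad Y=\begin{pmatrix}1&0\\-2&1\end{pmatrix},$$
both of order $p$. Using $ab\equiv 1\pmod p$, a direct multiplication gives
$$X^{a}Y^{b}=\begin{pmatrix}1-2ab&a\\-2b&1\end{pmatrix}=\begin{pmatrix}-1&a\\-2b&1\end{pmatrix},$$
which has trace $0$; hence by Cayley--Hamilton it squares to $-I$ and represents an element of order $2$ in $PSL_2(\mathbb{F}_p)$. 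Thus $X,Y$ satisfy all three defining relations. Since the upper and lower unipotent matrices generate $SL_2(\mathbb{F}_p)$ (here $-2$ is a unit as $p$ is odd), the images of $X,Y$ generate $PSL_2(\mathbb{F}_p)$, and the universal property of the presentation yields the surjection $\pi$ sending $x\mapsto X$, $y\mapsto Y$.

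The substantive part is the order bound, which I expect to be the main obstacle. Setting $z:=x^{a}y^{b}$, the relations give $z^{2}=1$, and since $\gcd(b,p)=1$ the generator $y$ is a power of $x^{-a}z$; hence $G=\langle x,z\rangle$ is generated by an element of order $p$ and an involution. I would then produce a Bruhat-type normal form inside $G$ from the relations alone: locate a cyclic torus $T$ of order dividing $\tfrac{p-1}{2}$, generated by suitable products of $x$ and its conjugate $zxz$, set $B:=\langle x\rangle T$ in the role of the Borel subgroup (so that $|B|=\tfrac{p(p-1)}{2}$), and prove that every element of $G$ lies in $B\cup BzB$. This gives $|G|\le|B|\bigl(1+[B:B\cap B^{z}]\bigr)=\tfrac{1}{2}p(p^2-1)$, because $z$ normalizes $T$ so that $[B:B\cap B^{z}]\le[B:T]=p$. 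Equivalently this is a coset enumeration of $B$ (of index $p+1$) in $G$, closed up using $x^{p}=z^{2}=1$ together with the relation encoding $y^{p}=1$.

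The genuine difficulty is that this bound must hold uniformly in $p$: for each fixed prime it is a finite Todd--Coxeter computation, but the relevant index $p+1$ grows with $p$, so no single finite enumeration settles all primes and one really needs the structural reduction above. The delicate points are verifying that $T$ is cyclic of the correct order and that the word reduction always terminates, i.e.\ that the three relations are already sufficient to force the Bruhat form; this is exactly what pins the order of $G$ down to $|PSL_2(\mathbb{F}_p)|$. For the small primes $p\in\{5,7,11\}$ I would confirm the count by a direct coset enumeration, which also serves as a check on the general reduction. Combining $\pi$ with the established bound then forces $\ker\pi=1$, so $G\cong PSL_2(\mathbb{F}_p)$.
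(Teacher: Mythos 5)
Your overall strategy (exhibit an explicit surjection $\pi\colon G\to PSL_2(\mathbb{F}_p)$, then bound $|G|$ from above so that $\pi$ must be injective) has the right shape, and your matrix computation is correct: $X^aY^b$ has trace $2-2ab\equiv 0$, hence squares to $-I$, so the relations hold and $X,Y$ generate. Indeed this is the shape of the argument in Beetham's paper, which is all the paper under review offers as proof (the theorem is stated with proof ``\cite{beetham}''). But the proposal has a fatal problem of interpretation, and separately its substantive half is missing.

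You have read the presentation as imposing a \emph{single} relation $(x^ay^b)^2=1$ for one fixed pair $(a,b)$ with $ab\equiv 1\pmod p$. Under that reading the theorem is false for every $p\geq 5$, so no order bound can possibly be proved: since $\gcd(a,p)=\gcd(b,p)=1$ and $x^p=y^p=1$, the substitution $u=x^a$, $v=y^b$ is a change of generators (with inverse $x=u^{a'}$, $y=v^{b'}$, $aa'\equiv bb'\equiv 1\pmod p$), and it carries your presentation isomorphically onto $\langle u,v \mid u^p=v^p=(uv)^2=1\rangle$, the $(p,p,2)$ von Dyck triangle group. For $p\geq 5$ one has $\tfrac1p+\tfrac1p+\tfrac12<1$, so this group is infinite; only $p=3$ gives a finite group ($A_4$). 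Consequently your proposed Bruhat-type bound $|G|\leq \tfrac12 p(p^2-1)$ cannot be established, and the Todd--Coxeter enumerations you suggest as checks for $p\in\{5,7,11\}$ would never terminate. The statement must be read, as in Beetham's theorem, with the relation $(x^ay^b)^2=1$ imposed for \emph{every} pair $(a,b)$ satisfying $ab\equiv 1\pmod p$ --- a family of $p-1$ relations. Your surjection survives this corrected reading, since your trace computation shows the matrices satisfy all of these relations simultaneously; but your counting sketch only ever invokes the one relation defining $z=x^ay^b$, so it cannot distinguish $G$ from the infinite triangle group and is structurally unable to close the argument.

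Even granting the correct reading, the part of the proof that carries all the weight --- constructing the torus $T$, verifying it is cyclic of order dividing $\tfrac{p-1}{2}$, showing $B=\langle x\rangle T$ is a subgroup of order $\tfrac12 p(p-1)$, and proving $G=B\cup BzB$ from the relations alone --- is announced rather than proved; you yourself flag exactly these points as the ``delicate'' ones. That reduction is not a routine verification: it is the entire content of the theorem (and of Beetham's paper), and it genuinely requires the full family of relations $(x^ay^b)^2=1$ for all admissible $(a,b)$. As written, the proposal establishes only the easy direction, namely that $PSL_2(\mathbb{F}_p)$ is a quotient of $G$.
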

\begin{proof}
	\cite{beetham}.
\end{proof}
\subsubsection{Maximal subgroups of $PSL_2(\mathbb{F}_p)$}

The group $PSL_2(\mathbb{F}_p)$ has $\frac{p(p^2-1)}{2}$ many elements. The elements of the group $PSL_2(\mathbb{F}_p)$ have one of the following order  $p,~2,~3,~4,~\text{or}~5,~d $ and a divisor of either $\frac{p-1}{2}$ or $\frac{p+1}{2}$ where $d$ is defined as $$d= min \lbrace ~ e| ~ e \geq 7 \text{ and either } e| \frac{p-1}{2} \text{ or } ~ e| \frac{p+1}{2}  \rbrace.$$

\noindent A subgroup $H$ of $G$ is said to be a maximal subgroup of $G$ if there exists a subgroup $K$ such that $H \subset K \subset G,$ then either $H=K$ or $K=G.$    
The maximal proper subgroups of $PSL_2(\mathbb{F}_p)$ are the followings \cite{sjerve}; 
\begin{itemize}
	\item[1.] dihedral group of order $p-1$ or $p+1$.
	\item[2.] solvable group of order $\frac{p.(p-1)}{2}$.
	\item[3.] $A_4$ if $p \equiv 3,13,27,37 ~ (mod ~ 40)$.
	\item[4.] $S_4$ if $p \equiv \pm 1 ~ (mod ~ 8)$.
	\item[5.] $A_5$ if $p \equiv \pm 1 ~ (mod ~ 5)$.
\end{itemize}
\subsection{Exponential Regression}
\begin{definition}
	Exponential regression is defined as the process of obtaining a mathematical expression for the exponential curve that best fits a set of data. In \cite{exponentialregression}, an exponential regression model has been discussed. As an example, we know a data is fit into a linear regression, if it can be explained using $y=mx+c$ where the data is represented as $\{x,y\}$ with $m$ as the slope and $c$ is the intercept on $y$-axis. Similarly, if the set of data can be best explained using
	\begin{eqnarray}
		Log[y]&=mLog[x]+c\\
		Y&=mX+c
	\end{eqnarray}
	where $Y=Log[y]$ and $X=Log[x]$ with slope $m$ and intercept $c$ then it can be called as exponential regression. The above example is the simplest form of exponential regression, with possibilities of significant extension in more complex scenario.
\end{definition}
\section{Stable upper genus of $PSL_2(\mathbb{F}_p)$ for $p\equiv 3~(mod~4)$}
\noindent In this section we will prove our main theorem [\ref{main}] using python coding.
\begin{theorem}\label{19}
	The stable upper genus of the group $PSL_2(\mathbb{F}_{19})$ is 33112. 
\end{theorem}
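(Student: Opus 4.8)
The plan is to realize the spectrum $Sp(G)$ of $G := PSL_2(\mathbb{F}_{19})$ explicitly through the Harvey condition (Lemma \ref{Harvey condition}) and then extract the stable threshold by a finite, provably exhaustive search. First I would record the arithmetic of $G$: here $|G| = \frac{19(19^2-1)}{2} = 3420$, and since $4 \nmid \frac{p-1}{2} = 9$ and $4 \nmid \frac{p+1}{2} = 10$ the group has no element of order $4$, while $d = \min\{e \geq 7 : e \mid 9 \text{ or } e \mid 10\} = 9$. Hence the only admissible periods are $\{2,3,5,9,10,19\}$, and Lemma \ref{3} does not apply verbatim, since its hypothesis $d \geq 15$ fails. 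So the first genuine step is to derive the analogue of Lemma \ref{3} for $p=19$: using the same Frobenius-character count and the explicit construction of surface-kernel epimorphisms as in \cite{kundu1,kundu2}, I would characterise exactly which tuples $(h;\,2^{[a_2]},3^{[a_3]},5^{[a_5]},9^{[a_9]},10^{[a_{10}]},19^{[a_{19}]})$ occur as signatures of $G$.

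Given that characterisation, the Riemann--Hurwitz relation of Lemma \ref{Harvey condition} converts genus membership into an explicit Diophantine condition,
\[
g-1 = 3420(h-1) + 855\,a_2 + 1140\,a_3 + 1368\,a_5 + 1520\,a_9 + 1539\,a_{10} + 1620\,a_{19},
\]
in which the coefficient of $a_m$ is $1710\bigl(1-\tfrac1m\bigr)$. Thus $Sp(G)$ is precisely the set of $g$ for which this equation is solvable in nonnegative integers $h,a_2,\dots,a_{19}$ subject to the realizability constraints just established. I would then run the loop over $h$ and the multiplicities $a_m$ — the Python enumeration advertised in the introduction — to compute $Sp(G)\cap[0,M]$ for some $M$ comfortably beyond the target, record all gaps, and read off the largest missing genus.

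To make this a proof rather than a heuristic I would supply a termination argument. Restricting to signatures with $h\geq 1$, the fact that every element of the simple group $PSL_2(\mathbb{F}_{19})$ is a commutator lets one freely adjoin a single order-$2$ period to any such realizable signature while still solving the product-one condition; this increments the genus by exactly $855$ and preserves both surjectivity and the value $h\geq 1$. Consequently the $h\geq 1$ achievable genera form a set closed under adding $855$, so a single observed run of $855$ consecutive achievable genera forces every larger integer into $Sp(G)$. The claim is then that the largest genus omitted from $Sp(G)$ is $33111$, whence $33112\in Sp(G)$ together with all $g>33112$, while $33111\notin Sp(G)$; this identifies $33112$ as the stable upper genus.

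The main obstacle is twofold and lives in the two non-computational hinges. The harder part is the rigorous establishment of the $p=19$ realizability criterion for small $h$, and in particular for the genus-zero signatures, where the existence of a surjective, order-preserving homomorphism from $\Gamma(\sigma)$ is genuinely constrained and must be verified through the Frobenius formula and the generation properties of $PSL_2(\mathbb{F}_{19})$ rather than assumed; the absence of order-$4$ elements and the collapse $d=\frac{p-1}{2}=9$ mean this cannot simply be quoted from Lemma \ref{3}. The second hinge is certifying that the enumeration is exhaustive below the threshold, that is, bounding $h$ and the multiplicities $a_m$ a priori via the consecutive-run argument above so that no gap beyond the computed window can have been overlooked. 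Once both are secured, the numerical output that $33111$ is the final gap yields the theorem.
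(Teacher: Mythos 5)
Your proposal follows essentially the same route as the paper's proof: convert genus membership into the Diophantine condition $g-1=3420(h-1)+855a_2+1140a_3+1368a_5+1520a_9+1539a_{10}+1620a_{19}$ governed by a signature-realizability criterion, verify by a bounded computer search that $33111$ is never attained while every genus in a window of length at least $855$ starting at $33112$ is attained, and then propagate upward by adjoining order-$2$ periods, each of which adds exactly $855$ to the genus. Two differences deserve comment. First, you correctly note that Lemma \ref{3} cannot be quoted verbatim for $p=19$, since $d=9<15$; the paper does quote it (``From [\ref{3}] we know that\dots'') and silently specializes it to the period set $\{2,3,5,9,10,19\}$, so your insistence on re-deriving the criterion via Frobenius counts and explicit surface-kernel epimorphisms addresses a hypothesis mismatch that the paper leaves unremarked --- this is a point in your favor, though it is also the step you leave unexecuted. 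Second, your stabilization argument is group-theoretic (adjoin an order-$2$ period to an $h\geq 1$ signature using the fact that every element of $PSL_2(\mathbb{F}_{19})$ is a commutator), whereas the paper's is purely formal: once the if-and-only-if inequality criterion is in hand, increasing $a_2$ manifestly preserves the inequality, so $(h;2^{[a_2+n]},\dots)$ is again a signature, with no restriction to $h\geq 1$. Your variant carries a subtlety the paper's does not: after replacing $[\phi(a_1),\phi(b_1)]$ by a commutator that absorbs the new involution, surjectivity of the modified homomorphism is no longer automatic (the original images $\phi(a_1),\phi(b_1)$ may be needed to generate $G$), and your run-of-$855$ argument additionally requires the genera in the observed window to be realized by $h\geq1$ signatures. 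Both issues are repairable, but they become superfluous once the criterion you plan to derive is available, at which point the paper's monotonicity-in-$a_2$ argument is simpler and unconditional.
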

\begin{proof}
	We will prove the theorem in two steps.
	\begin{enumerate}
		\item[Step 1:] We will first prove that $33111 \notin Sp(PSL_2(\mathbb{F}_{19})).$ \\
		\noindent From [\ref{3}] we know that $(h;2^{[a_2]},3^{[a_3]},5^{[a_5]},9^{[a_9]},10^{[a_{10}]},19^{[a_{19}]})$ is a signature of $PSL_2(\mathbb{F}_{19})$ if and only if $$3420h-3419+855a_2+1140a_3+1368a_5+1520a_9+1539a_{10}+1620a_{19}\geq 96.$$
	\noindent 	If possible let $$33111=3420h-3419+855a_2+1140a_3+1368a_5+1520a_9+1539a_{10}+1620a_{19}.$$
\noindent Then the value of $h$ could be at most $11$. Similarly the values of $a_i$ could be at most $43,~ 33,~ 27,~ 25,~24,~23$ for $i= ~ 2,~ 3,~ 5,~ 9,~10,~19$ respectively. So We will consider  $$0 ~ \leq ~ h ~ \leq ~11$$ $$0 ~ \leq ~ a_2 ~ \leq ~ 43$$ $$0 ~ \leq ~ a_3 ~ \leq ~ 33$$ $$0 ~ \leq ~ a_5 ~ \leq ~ 27$$ $$0 ~ \leq ~ a_9 ~ \leq ~ 25$$ $$0 ~ \leq ~ a_{10} ~ \leq ~ 24$$ $$0 ~ \leq ~ a_{19} ~ \leq ~ 23.$$

\noindent We execute the following python code to conclude that $PSL_2(\mathbb{F}_{19})$ can not act on a compact, connected, orientable surface of genus $33111$ preserving the orientation.

\lstset{language=Python}
\lstset{frame=lines}
\lstset{caption={$33111$ is not an admissable signature of $PSL_2(\mathbb{F}_{19})$}}
\lstset{label={2nd:code_direct}}
\lstset{basicstyle=\footnotesize}
\begin{lstlisting}
	def func2(h,a2,a3,a5,a9,a10,a19):
	return 1+3420*(h-1) + 855*a2 + 1140*a3 + 1368*a5 + 1520*a9 +
	
	 1539*a10 + 1620*a19
	
	
	
	for h in range(11):
	for a2 in range(43):
	for a3 in range(33):
	for a5 in range(27):
	for a9 in range(25):
	for a10 in range(24):
	for a19 in range(23):
	sol = func2(h,a2,a3,a5,a9,a10,a19)
	if sol >33111:
	if sol < 33111:
	if sol == 33111:
	print("wrong")
	\end{lstlisting}
\item[Step 2:] To complete the proof of our claim, we have to find out signatures corresponding to the genus values $33112-33967$ of $PSL_2(\mathbb{F}_{19})$. We execute the following python code to compute all the signature values of $PSL_2(\mathbb{F}_{19})$ corresponding to the genus values $33112-33967$. 

\lstset{language=Python}
\lstset{frame=lines}
\lstset{caption={Signatures of $PSL_2(\mathbb{F}_{19})$} corresponding to the genus value $33112-33967$}
\lstset{label={3rd:code_direct}}
\lstset{basicstyle=\footnotesize}
\begin{lstlisting}
	def func2(h,a2,a3,a5,a9,a10,a19):
	return 1+3420*(h-1) + 855*a2 + 1140*a3 + 1368*a5 + 1520*a9 +
		
	 1539*a10 + 1620*a19
	
	
	sol_arr = []
	const_arr = []
	for h in range(11):
	for a2 in range(44):
	for a3 in range(33):
	for a5 in range(27):
	for a9 in range(25):
	for a10 in range(25):
	for a19 in range(24):
	sol = func2(h,a2,a3,a5,a6,a11)
	if sol >33112:
	if sol < 33967:
	#print(sol)
	sol_arr += [sol]
	const_arr += [[h,a2,a3,a5,a9,a10,a19]]
	
	
	
	color_dictionary = dict(zip(sol_arr, const_arr))
	
	sort_orders = sorted(color_dictionary.items(), key=lambda x: x[0])
	
	for i in sort_orders:
	print(i[0], i[1])
	
\end{lstlisting} 

\noindent Now we have to prove that $PSL_2(\mathbb{F}_{19})$ can act on all compact, connected, orientable surface of genus $g ~ \geq ~ 33967$ preserving the orientation. Let $g ~ \geq 33967$, and $\Sigma_{g}$ be a compact, connected, orientable surface of genus $g$. So we have $$  g-33112 ~ \equiv ~ s ~ (mod ~855) ~ \text{ where } ~1 ~ \leq ~ s ~ \leq 854.$$ Then $g ~ = ~ l+n.855$ where $ l ~= 33112+ s$. We know the signature corresponding to the genus $l$ as $333112~\leq l~ \leq 33967$ and let it be $(h;m_2,~m_3,~m_5,~m_9,m_{10},m_{19})$. Then the signature corresponding to the genus $g$ is $(h;m_2+n,~m_3,~m_5,~m_9,m_{10},m_{19})$. In this way we can find signature corresponding to genus $g ~ \geq 33967$. This completes the proof of our claim. 
	\end{enumerate}
\end{proof}
\begin{theorem}\label{23}
	The stable upper genus of the group $PSL_2(\mathbb{F}_{23})$ is 297084. 
\end{theorem}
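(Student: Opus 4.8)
The plan is to mirror the two--step argument used for $PSL_2(\mathbb{F}_{19})$ in Theorem \ref{19}, after first installing the appropriate admissibility inequality for $p=23$. Here $|PSL_2(\mathbb{F}_{23})|=\frac{23(23^2-1)}{2}=6072$, so $\frac{|G|}{2}=3036$, and the element orders are $2,3,4,6,11,12,23$: note that $23\equiv -1\ (\mathrm{mod}\ 8)$ forces $S_4\le G$ and hence order--$4$ elements, while $23\not\equiv \pm1\ (\mathrm{mod}\ 5)$ excludes $A_5$ and order--$5$ elements, and $d=11=\frac{p-1}{2}$. Since $p=23$ lies outside the hypotheses of Lemma \ref{3} (different maximal--subgroup profile, $d<15$), I would first re-derive the characterization by the same Frobenius--theorem / surface--kernel--epimorphism method cited in the introduction, obtaining that $(h;2^{[a_2]},3^{[a_3]},4^{[a_4]},6^{[a_6]},11^{[a_{11}]},12^{[a_{12}]},23^{[a_{23}]})$ is a signature of $PSL_2(\mathbb{F}_{23})$ iff
\[
g=1+6072(h-1)+1518a_2+2024a_3+2277a_4+2530a_6+2760a_{11}+2783a_{12}+2904a_{23}
\]
is at least the relevant existence threshold $N$, the coefficients being $\frac{|G|}{2}\left(1-\frac{1}{m}\right)$ dictated by Riemann--Hurwitz.

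The key simplification is to recast the question in the language of numerical semigroups. Taking $h=1$ shows every admissible genus satisfies $g-1\in S$, where $S=\langle 1518,\,2024,\,2277,\,2530,\,2760,\,2783,\,2904\rangle$, and since $6072=4\cdot 1518$ the variable $h$ contributes nothing beyond $a_2$; conversely any $g$ with $g-1\in S$ and $g\ge N$ is admissible. Because the generators have $\gcd$ equal to $1$, the set $S$ is cofinite, a stable upper genus exists, and it equals $F(S)+2$ with $F(S)$ the Frobenius number. Thus the claim $g=297084$ is equivalent to $F(S)=297082$.

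For Step 1 I would show $297082\notin S$ (equivalently $297083\notin Sp(PSL_2(\mathbb{F}_{23}))$). Rather than the eight--fold nested loop of Theorem \ref{19}, which here would require on the order of $10^{16}$ iterations, I would run a reachability dynamic program: set $0\in S$ and propagate $x\in S\Rightarrow x+s\in S$ over each generator $s$ for all $x$ up to about $3\times 10^{5}$. This is linear in the genus bound and certifies non-representability of $297082$ outright. For Step 2 the same table supplies explicit representations for every genus in a full period $[297084,\,297084+1517]$ (length $1518$, the smallest generator). Then for any $g\ge 297084$, writing $g=l+1518n$ with $l$ in that window and $n\ge 0$ and incrementing $a_2$ by $n$ in the signature of $l$ yields a signature for $g$; as these genera lie far above $N$, admissibility is automatic. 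This gives $g\in Sp$ for all $g\ge 297084$ while $297083\notin Sp$, so the stable upper genus is $297084$.

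The main obstacle is twofold. First, honestly establishing the admissibility inequality at $p=23$: since the subgroup structure differs from the Lemma \ref{3} regime ($S_4$ present, $A_5$ absent, $d=11<15$), the surface--kernel--epimorphism analysis and the threshold $N$ must be re-verified rather than quoted. Second, the computation does not scale from the $PSL_2(\mathbb{F}_{19})$ template, so the reduction to the numerical semigroup $S$ together with the reachability dynamic program is essential, both for certifying non-representability of $297082$ and for confirming that an entire period of consecutive genera is realized. Verifying that $F(S)=297082$, and that neither small--$h$ configurations nor the threshold $N$ reintroduce gaps above $297084$, is where the real care will be required.
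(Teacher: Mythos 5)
Your high-level plan (re-derive the admissibility inequality for $p=23$, then a two-step representability argument with periodicity in $a_2$) is the same as the paper's, whose entire proof of this theorem is ``Similar to Theorem~\ref{19}''; your element orders $2,3,4,6,11,12,23$ and Riemann--Hurwitz coefficients $1518,2024,2277,2530,2760,2783,2904$ are correct, your observation that Lemma~\ref{3} does not actually cover $p=23$ is a fair criticism of the paper itself, and replacing the paper's infeasible eight-fold nested loop by a reachability table is sensible.

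However, your numerical-semigroup reduction has a genuine error: the claim that ``$6072=4\cdot 1518$, so $h$ contributes nothing beyond $a_2$'' and hence every admissible genus satisfies $g-1\in S$. That is true only for $h\geq 1$; for $h=0$ Riemann--Hurwitz gives $g-1=-6072+\sum_m c_m a_m$, and the negative multiple of $6072$ is not absorbed by $S$. Concretely, $(0;23,23,23)$ is a signature: send the two generators of the $(23,23,23)$-triangle group to $\bigl(\begin{smallmatrix}1&1\\0&1\end{smallmatrix}\bigr)$ and $\bigl(\begin{smallmatrix}1&0\\-4&1\end{smallmatrix}\bigr)$, whose product has trace $-2$ (hence order $23$), and which generate $PSL_2(\mathbb{F}_{23})$; this realizes genus $g=1-6072+3\cdot 2904=2641$, yet $2640\notin S$ (it is not a generator and is less than twice the smallest generator). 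So the correct characterization is $g\in Sp$ iff $g\geq N$ and $g-1+6072\in S$, and (threshold permitting) the stable upper genus equals $F(S)-6072+2$, not $F(S)+2$. This breaks both of your steps. In Step 1, proving $297082\notin S$ does not rule out a genus-zero signature realizing $297083$; you would need $297082+6072=303154\notin S$. In Step 2, if the theorem's value $297084$ is correct then $F(S)=303154>298601$, and your own periodicity argument then forces at least one genus in your window $[297084,298601]$ to have $g-1\notin S$: that genus is realized only with $h=0$, so your $S$-only table would have a hole there and the period-$1518$ propagation could not start. Carried out faithfully, your method would output $F(S)+2=303156$ rather than $297084$. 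The fix is cheap: run your dynamic program on membership of $g-1+6072$ in $S$ (equivalently on the shifted set $S-6072$), which is exactly what the paper's loops, which start at $h=0$, implicitly compute.
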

\begin{proof}
	Similar to Theorem\ref{19}.
\end{proof}
\begin{theorem}\label{31}
	The stable upper genus of the group $PSL_2(\mathbb{F}_{31})$ is 20275804. 
\end{theorem}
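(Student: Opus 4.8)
The plan is to reproduce, for $p=31$, the two-step scheme carried out for $PSL_2(\mathbb{F}_{19})$ in Theorem \ref{19}. First I would record the arithmetic of the group. Here $|PSL_2(\mathbb{F}_{31})|=\frac{31(31^2-1)}{2}=14880$ and $\frac{|G|}{2}=7440$. Since $31\equiv 1~(mod~5)$ the group contains $A_5$, and since $31\equiv -1~(mod~8)$ it also contains $S_4$, so elements of order $4$ genuinely occur; moreover $\frac{p-1}{2}=15$, $\frac{p+1}{2}=16$, and $d=\min\{e\geq 7: e\mid 15 \text{ or } e\mid 16\}=8$. Hence the admissible periods are $2,3,4,5,8,15,16,31$ and a signature has the shape $(h;2^{[a_2]},3^{[a_3]},4^{[a_4]},5^{[a_5]},8^{[a_8]},15^{[a_{15}]},16^{[a_{16}]},31^{[a_{31}]})$. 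By the Riemann--Hurwitz formula the corresponding genus is
\begin{equation}
g = 14880h - 14879 + 3720a_2 + 4960a_3 + 5580a_4 + 5952a_5 + 6510a_8 + 6944a_{15} + 6975a_{16} + 7200a_{31},
\end{equation}
where each coefficient is $7440\bigl(1-\tfrac1m\bigr)$ for the respective period $m$.

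Before any computation I would establish the realizability inequality for $p=31$, i.e. the analogue of the cutoff ``$\geq 96$'' in Theorem \ref{19}. This cannot simply be quoted from Lemma \ref{3}: that lemma assumes $p\not\equiv \pm 1~(mod~8)$ and $d\geq 15$, both of which fail here because $S_4\subseteq G$ and $d=8$. I would therefore re-derive the bound from scratch, using Frobenius's theorem to count solutions of the relevant word equations and exhibiting explicit surface-kernel epimorphisms $\Gamma(\sigma)\twoheadrightarrow PSL_2(\mathbb{F}_{31})$ that preserve the orders of all torsion elements, exactly along the lines of \cite{kundu1,kundu2}. The output is a threshold $N_{31}$ such that a tuple is a genuine signature precisely when the right-hand side above exceeds $N_{31}$.

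With the inequality in place the argument splits as in Theorem \ref{19}. \textbf{Step 1} shows $20275803\notin Sp(PSL_2(\mathbb{F}_{31}))$: setting the genus formula equal to $20275803$ bounds every variable from above (taking the remaining summands to be minimal gives, for instance, $h\leq 1363$ and $a_2\leq 5454$, with smaller bounds for the other $a_i$), and a finite search over this box---with the innermost variable solved for arithmetically rather than looped---confirms no admissible tuple attains $20275803$. \textbf{Step 2} produces signatures for every genus in the block $[20275804,\,20275804+3720]$, the block length being the smallest period coefficient $3720$. For arbitrary $g\geq 20275804+3720$ one writes $g=l+3720n$ with $l$ in this block and $n\geq 1$; replacing $a_2$ by $a_2+n$ in the signature realizing $l$ then realizes $g$. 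Together the two steps pin the stable upper genus at $20275804$.

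The main obstacle is twofold. Conceptually, the realizability inequality must be worked out honestly for the $S_4$-case with $d=8$, which is where the genuine group theory sits and where the generic Lemma \ref{3} gives no shortcut. Computationally, the search box of Step 1 is roughly $600$ times larger than for $p=19$, so a naive eightfold nested loop is hopeless. The practical remedy is to recast the question as a numerical-semigroup problem: up to the finitely many spherical ($h=0$) signatures, the representable genera are the translate by $1$ of the semigroup generated by $\{14880,3720,4960,5580,5952,6510,6944,6975,7200\}$ intersected with the realizability region, so non-representability of $20275803$ is decided by a single linear sieve over $[0,20275803]$, running in time linear in the target rather than exponential in the number of periods.
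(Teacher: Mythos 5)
Your proposal follows the same two-step skeleton that the paper intends: the paper's entire proof of this theorem is the sentence ``Similar to Theorem~\ref{19}'', i.e.\ exclude the genus $20275803$ by a finite search over the Riemann--Hurwitz inequality, then realize a full block of consecutive genera and propagate upward by incrementing $a_2$. Your Step~1/Step~2 logic, the block length equal to the smallest coefficient $3720$, and the monotonicity argument for $a_2\mapsto a_2+n$ are exactly the paper's mechanism. Where you genuinely depart from (and improve on) the paper is in the two issues it silently skips. First, you are right that the realizability criterion cannot be imported from Lemma~\ref{3}: for $p=31$ one has $31\equiv -1~(mod~8)$ and $d=8<15$, so both hypotheses of that lemma fail, the period set becomes $\{2,3,4,5,8,15,16,31\}$ (orders $4$, $8$, $16$ now occur since $8\mid p+1$), and the analogue of the threshold ``$\geq 96$'' must be re-derived via Frobenius counting and explicit surface-kernel epimorphisms; the paper never acknowledges this, so on this point your account is more rigorous than the source, although you too leave the actual derivation of $N_{31}$ as a sketch. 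Second, your replacement of the nested loops by a numerical-semigroup sieve (checking separately the $h\geq 1$ translate and the finitely many relevant spherical cases) is a genuine algorithmic necessity at this scale, where the paper's naive loop structure--already barely feasible for $p=19$--could not terminate; your ``600 times larger'' understates the blow-up of the lattice-point count, but the conclusion that the naive search is hopeless is correct. In short: same strategy as the paper, but you identify the two real obstructions (the inapplicability of Lemma~\ref{3} and the computational explosion) that the paper's one-line proof conceals, and you indicate workable repairs for both; the one ingredient neither you nor the paper actually establishes is the precise realizability threshold for $p=31$, on which the claimed value $20275804$ ultimately depends.
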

\begin{proof}
	Similar to Theorem\ref{19}.
\end{proof}
\textbf{Proof of the main theorem \ref{main}.}
\begin{proof}
	In the previous Theorems~\ref{19},\ref{23},\ref{31}, we have obtained the upper stable genus for $p\in\{19,23,31\}$. Combining with the previously obtained results described in \cite{kundu1}, we now posses a data-set of stable upper genus $g\in\{399,3508,33112,297084,20275804\}$ corresponding to prime $p\in\{7,11,19,23,31\}$ for the group $PSL_2(\mathbb{F}_p)$, shown as blue dots in Fig.\ref{fitting}. We next investigate the dependencies of the stable upper genus values $g$ with respect to prime number $p\equiv 3~(mod~4)$, subsequently constructing a mathematical function described in Eq.\ref{g_exp}, which can be visualized in Fig.\ref{fitting} \cite{mathematica}.
	\begin{figure}[htb]
		\centering
		\epsfig{file=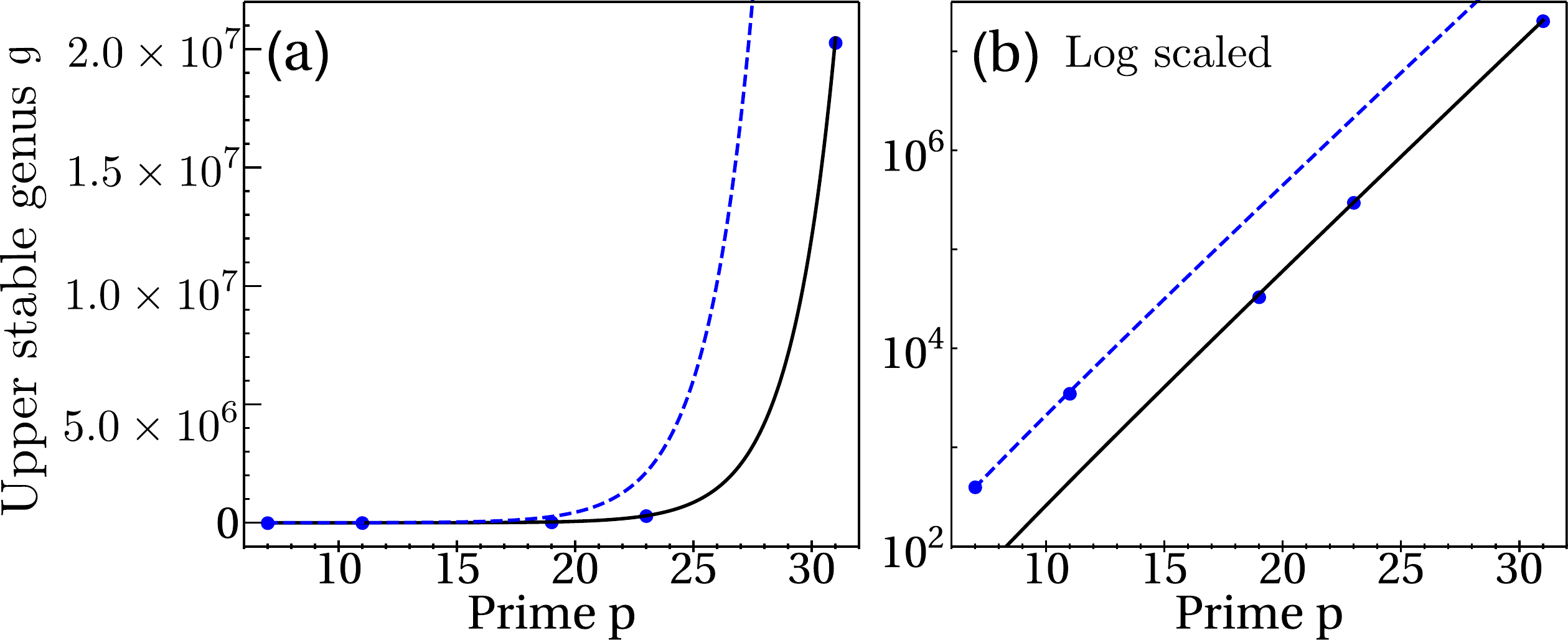,width= 0.9\linewidth}
		\caption{ Upper stable genus values (blue dot) given by $PSL_2(\mathbb{F}_p)$ for $p$ ranging from $7$ to $31$. Black line indicates the fitting on the upper stable genus values.
			\label{fitting}}
	\end{figure}
	In Fig.~\ref{fitting}~(b), we show the Log scaled plot of (a), which provides an indication of exponential dependence of $g$ on $p$. For the next step, we consider an ansatz in the form Eq. \ref{g_exp}. Additionally, Fig.~\ref{fitting} indicates two linear Log scaled lines, separating the cases $p\in\{7,11\}$ and $p\in\{19,23,31\}$. Based on the exponential fitting, using the following Mathematica code
	\lstset{language=Mathematica}
	\lstset{frame=lines}
	\lstset{caption={Exponential fitting of $g$ with respect to $p$ for the group $PSL_2(\mathbb{F}_{p})$}}
	\lstset{label={3rd:code_direct}}
	\lstset{basicstyle=\footnotesize}
	\begin{lstlisting}
		data = {{7, 399}, {11, 3508}, {19, 33112}, {23, 297084}, {31, 
				20275804}}
		
		lineardata = ListPlot[data, Frame -> True, 
		PlotMarkers -> {\[FilledCircle], 10},
		PlotStyle -> Blue, 
		FrameLabel -> {Style["Prime p", Black, 14], 
			Style["Upper stable genus g", Black, 14]}, 
		FrameTicksStyle -> Directive[FontSize -> 14], 
		ImageSize -> Medium, 
		PlotRange -> All]
		
		logdata = ListPlot[data, Frame -> True, 
		PlotMarkers -> {\[FilledCircle], 10},
		PlotStyle -> Blue, 
		FrameLabel -> {Style["Prime p", Black, 14], 
			Style["Upper stable genus g", Black, 14]}, 
		FrameTicksStyle -> Directive[FontSize -> 14], 
		ImageSize -> Medium, 
		PlotRange -> All]
		
		Manipulate[
		Show[{logdata, LogPlot[a*p^b*Exp[c*x], {p, 7, 31}]}], 
		{b, 0.0, 2, 0.5}, {a, 0.5, 4.5, 0.1}, {c, 0.5, 1, 0.01}]
		
		Linearplot = 
		Show[{lineardata, 
			Plot[0.5*x^0.5*Exp[0.51*x], {x, 7, 31}, 
			PlotRange -> All, 
			PlotStyle -> Black], 
			Plot[4.5*x^0.5*Exp[0.5*x], {x, 7, 31}, 
			PlotRange -> All, 
			PlotStyle -> {Blue, Dashed}]}, ImageSize -> Large]
		
		Logplot = 
		Show[{logdata, 
			LogPlot[0.5*x^0.5*Exp[0.51*x], {x, 7, 31}, 
			PlotRange -> All, 
			PlotStyle -> Black], 
			LogPlot[4.5*x^0.5*Exp[0.5*x], {x, 7, 31},
			PlotRange -> All, 
			PlotStyle -> {Blue, Dashed}]}, ImageSize -> Large]
		
	\end{lstlisting} 
	which leads us to the values of constants $\{a\rightarrow 4.5,b\rightarrow0.5,c\rightarrow0.5\}$ for $p\in\{7,11\}$ and $\{a\rightarrow 0.5,b\rightarrow0.5,c\rightarrow0.51\}$ for $p\in\{19,23,31\}$. This provides a general expression for $g$ as function of $p$ for higher range of primes $p\equiv 3~(mod~4)$ following the second category, given as
	\begin{equation}
		g=0.5p^{0.5}\exp[0.51p].
		\label{g_2}
	\end{equation}
	This expressions essentially captures the variation of upper stable genus $g$ with respect to $p$, and provides us with a rough estimate for general case scenario of $p\equiv 3~(mod~4)$ as shown in the Table\ref{table1}. In order to validate the fitting, we predict for arbitrary prime $p=59$, which should have stable upper genus close to $g=44907302712962$.
	\begin{table}[htb]
		\begin{center}
			\begin{tabular}{||c| c| c||} 
				\hline
				Prime p & Stable upper genus g & Exponential fitting g \\ [0.5ex] 
				\hline\hline
				7 & 399 & 394 \\ 
				\hline
				11 & 3508 & 3651  \\
				\hline
				19 & 33112 & 35209  \\
				\hline
				23 & 297084 & 297926 \\
				\hline
				31 & 20275804 & 20457219  \\ [1ex] 
				\hline
			\end{tabular}
			\caption{Comparison of Stable upper genus obtained computationally with the exponential fitting described in \ref{g_exp} for primes $p\in\{7,11,19,23,31\}$.}
			\label{table1}
		\end{center}
	\end{table}
\end{proof}
\section{Acknowledgement}
We acknowledge fruitful discussions with Dr. Manish Kumar Pandey. Author Mukherjee acknowledges financial support from Ministry of Education, India for the Prime Minister's Research Fellowship (PMRF) for pursuing Ph.D. Author Kundu is grateful to Council of Scientific and Industrial Research (CSIR),  India for the partial financial support. 
\newpage

	\end{document}